\newcommand{\E}{{\calE}}
\newcommand{\trace}{\text{\rm trace }}
\newcommand{\Trace}{{\text{\rm Trace}}}
\newcommand{\calE}{{\cal{E}}}
\newcommand{\CalA}{{\cal{A}}}
\newcommand{\CalT}{{\cal{T}}}
\newcommand{\btheta}{{\tilde \theta}}
\newcommand{\Tmu}{{\tilde \mu}}
\newcommand{\R}{\Re}
\newcommand{\const}{\text{\rm constant}}
\newcommand{\RR}{{\mathbb R}}
\newcommand{\WW}{{\mathbb W}}
\newcommand{\ZZ}{{\mathbb Z}}
\newcommand{\CC}{{\mathbb C}}
\newcommand{\mA}{{\mathbb A}}
\newcommand{\FF}{{\mathbb F}}
\newcommand{\GG}{{\mathbb G}}
\newcommand\cA{{\cal  A}}
\newcommand\cD{{\cal  D}}
\newcommand\cW{{\cal  W}}
\newcommand\cR{{\cal  R}}
\newcommand\cL{{\cal  L}}
\newcommand\cN{{\cal  N}}
\newcommand\cE{{\cal  E}}
\newcommand\cF{{\cal  F}}
\newcommand\cP{{\cal  P}}
\newcommand\cM{{\mathcal M}}
\newcommand\cT{{\mathcal T}}
\newcommand\cS{{\mathcal S}}
\newcommand\cZ{{\cal  Z}}
\newcommand{\Range}{\mathop\mathrm{Range}\nolimits}
\def\eps{\varepsilon }
\def\const{\text{\rm constant}}
\newcommand\adots{\mathinner{\mkern2mu\raise1pt\hbox{.}
\mkern3mu\raise4pt\hbox{.}\mkern1mu\raise7pt\hbox{.}}}
\newcommand{\rank}{{\rm rank }}
\newtheorem{theo}{Theorem}[section]
\newtheorem{cor}[theo]{Corollary}
\newtheorem{lem}[theo]{Lemma}
\newtheorem{ass}[theo]{Assumption}
\newtheorem{assums}[theo]{Assumptions}
\newtheorem{rem}[theo]{Remark}
\newtheorem{exams}[theo]{Examples}
\numberwithin{equation}{section}
\newcommand\kernel{\hbox{\rm Ker}}
\newcommand\br{\begin{remark}}
\newcommand\er{\end{remark}}
\newcommand\bp{\begin{pmatrix}}
\newcommand\ep{\end{pmatrix}}
\newcommand\be{\begin{equation}}
\newcommand\ee{\end{equation}}
\newcommand\ba{\begin{equation}\begin{aligned}}
\newcommand\ea{\end{aligned}\end{equation}}
\newcommand{\bap}{\begin{app}}
\newcommand{\eap}{\end{app}}
\newcommand{\begs}{\begin{exams}}
\newcommand{\eegs}{\end{exams}}
\newcommand{\beg}{\begin{example}}
\newcommand{\eeg}{\end{exaplem}}
\newcommand{\bpr}{\begin{proposition}}
\newcommand{\epr}{\end{proposition}}
\newcommand{\bt}{\begin{theorem}}
\newcommand{\et}{\end{theorem}}
\newcommand{\bc}{\begin{corollary}}
\newcommand{\ec}{\end{corollary}}
\newcommand{\bl}{\begin{lemma}}
\newcommand{\el}{\end{lemma}}
\newcommand{\bd}{\begin{definition}}
\newcommand{\ed}{\end{definition}}
\newcommand{\brs}{\begin{remarks}}
\newcommand{\ers}{\end{remarks}}
\newtheorem{theorem}{Theorem}[section]
\newtheorem{proposition}[theorem]{Proposition}
\newtheorem{corollary}[theorem]{Corollary}
\newtheorem{lemma}[theorem]{Lemma}
\newtheorem{definition}[theorem]{Definition}
\newtheorem{example}[theorem]{Example}
\newtheorem{remark}[theorem]{Remark}
\begin{document}

\title{Numerical error analysis for Evans function computations:
a numerical gap lemma, centered-coordinate methods,
and the unreasonable effectiveness of continuous orthogonalization }

\author{\sc \small 
Kevin Zumbrun\thanks{Indiana University, Bloomington, IN 47405;
kzumbrun@indiana.edu:
Research of K.Z. was partially supported
under NSF grants number DMS-0300487,  
DMS-0505780, and DMS-0801745.
}}

\maketitle

\begin{abstract}
We perform error analyses explaining some previously mysterious phenomena
arising in numerical computation of the Evans function, in particular
(i) the advantage of centered coordinates for exterior product and related
methods, and
(ii) the unexpected stability of the (notoriously unstable)
continuous orthogonalization method of Drury in the context of
Evans function applications.
The analysis in both cases centers around a numerical version of the 
gap lemma of Gardner--Zumbrun and Kapitula--Sandstede,
giving uniform error estimates for 
apparently ill-posed projective boundary-value problems with asymptotically
constant coefficients, so long as the rate of convergence of coefficients 
is greater than the ``badness'' of the boundary projections 
as measured by negative spectral gap.
In the second case, we use also the simple but apparently 
previously unremarked observation that the Drury method is in fact (neutrally)
stable when used to approximate an unstable subspace, 
so that continuous orthogonalization 
and the centered exterior product method 
are roughly equally well-conditioned as methods 
for Evans function approximation.
The latter observation makes possible an extremely simple
nonlinear boundary-value method
for possible use in large-scale systems, extending ideas suggested by Sandstede.
We suggest also a related linear method based on the conjugation lemma of
M\'etivier--Zumbrun, 
an extension of the gap lemma mentioned above.
\end{abstract}

\tableofcontents

\section{Introduction}\label{intro}

Recently, numerical Evans function computations have received
a great deal of attention as a tool for the stability analysis
of standing and traveling wave patterns in one and several dimensions; 
see, e.g.,
\cite{AS,Br,BrZ,BDG,AlB,HSZ,HuZ1,HuZ2,LPSS,GLZ,HLZ,CHNZ,HLyZ1,HLyZ2}.
In the work of the author together with Brin, Humpherys, Sandstede, 
and others, there has emerged a small list of three computational
rules of thumb, without which Evans function computations become 
hopelessly inefficient, but with which they become in 
usual situations almost trivial.
Indeed, Humpherys has developed a general MATLAB-based package 
(STABLAB) based on these principles that gives excellent results on
essentially all problems up to now considered.

Two of the items on this list are self-evident, but the third, the
need to ``center'' coordinates, does not appear to be well-known
and, indeed, at first sight appears to contradict standard stability
principles.
The purpose of this paper is twofold: first, to share 
these practical rules of thumb
and, second, to give a mathematical justification for the 
better-than-expected observed results of their implementation, that is,
to put these ad hoc principles on a rational and quantitative basis.
In the process, we discover a numerical analog of the gap lemma of 
\cite{GZ,KS}, a sort of superconvergence principle;  a new stability property 
of the well-known continuous orthogonalization method of Drury; and,
building on ideas of Sandstede
\cite{S} and Humpherys--Zumbrun \cite{HuZ1},
 an extremely simple boundary-value method for possible use
in ultra-large scale systems.

\subsection{Computation of the Evans function}\label{evans}
Let $L$ be a linear differential operator with asymptotically 
constant coefficients along some preferred spatial direction $x$, 
and suppose that the eigenvalue equation
\begin{equation}\label{eval}
(L-\lambda)w=0
\end{equation}
may be expressed as a first-order ODE in an appropriate phase space:
\begin{equation}\label{firstorder}
\begin{aligned}
W_x&=A(x,\lambda)W, \qquad
\lim_{x\to \pm \infty } A(x,\lambda)=A_\pm (\lambda),
\end{aligned}
\end{equation}
with $A$ analytic in $\lambda$ as a function from $\CC$ to $C^1(\RR,\CC^{n\times n})$ and the dimension $k$ of the stable subspace $S_+$ of $A_+$ and dimension $n-k$ of the unstable subspace $U_-$ of $A_-$ summing to the dimension $n$ of the entire phase space.
Then, the Evans function is defined as
\begin{equation}\label{evansdef}
D(\lambda):=\det\begin{pmatrix} W_1^+ & \cdots & W_k^+&W_{k+1}^- & \cdots & W_n^-\end{pmatrix}_{|x=0},
\end{equation}
where
$W_1^+, \dots, W_k^+$ and $W_{k+1}^-, \dots, W_n^-$ 
are analytically-chosen (in $\lambda$) bases
of the manifolds of solutions decaying as $x\to +\infty$ and $-\infty$.
For details of this construction, 
see, e.g., \cite{AGJ,PW,KS,GZ,Z1,HuZ1,HSZ} and references therein.  

Analogous to the characteristic polynomial for a finite-dimensional 
operator, $D(\cdot)$ is analytic in $\lambda$ with zeroes corresponding 
in both location and multiplicity to the eigenvalues of the linear operator $L$
\cite{GJ1,GJ2}.
Taking the winding number around a contour
$\Gamma =\partial \Lambda\subset\{\Re \lambda \ge 0\}$,
where $\Lambda$ is a set outside which eigenvalues may be excluded
by other methods (e.g. energy estimates or asymptotic ODE theory),
counts the number of unstable eigenvalues
in $\Lambda$ of the linearized operator about the wave,
with zero winding number corresponding to stability.
See, e.g., \cite{Br,BrZ,BDG,HSZ,HuZ1,BHRZ,HLZ,CHNZ,HLyZ1,HLyZ2,BHZ}. 
Alternatively, one may use Mueller's
method or any number of root-finding methods for analytic functions
to locate individual roots; see, e.g., \cite{OZ,LS}.

Numerical approximation of the Evans function breaks into two steps: 
(i) the computation of analytic bases for stable (resp. unstable) subspaces 
of $A_+$ (resp. $A_-$) and 
(ii) the propagation of these bases by ODE \eqref{firstorder} on a 
sufficiently large interval $x\in [M,0]$ (resp. $x\in [-M, 0]$).  
In both steps, it is important to preserve the fundamental property of 
analyticity in $\lambda$, which is extremely useful in computing 
roots by winding number or other methods.
Both problems concern (different aspects of)
{\it numerical propagation of subspaces}, the first in $\lambda$
and the second in $x$, thus tying into large bodies of theory
in both numerical linear algebra \cite{ACR,DDF,DE1,DE2,DF} and
hydrodynamic stability theory \cite{Dr,Da,NR1,NR2,NR3,NR4,B}.

Problem (i) has been examined in \cite{HSZ,Z2,BHZ}; for completeness,
we gather the (existing but dispersed) conclusions here in Section \ref{init}.
Our main emphasis, however, is on problem (ii) and numerical stability
analysis, for which we obtain substantially new results.

\subsection{Three Bad Things: numerical pitfalls 
and their resolutions}\label{three}

We now focus on problem (ii).
Our three basic principles for efficient numerical integration
of \eqref{firstorder} are readily motivated by consideration
of the simpler constant-coefficient case
\begin{equation}\label{cc}
\begin{aligned}
W_x&=A W, \qquad
A\equiv \const.
\end{aligned}
\end{equation}

\subsubsection{Potential pitfalls}\label{pitfalls}
We note the following
three basic pitfalls, two obvious and one perhaps less so.
\medskip

{\bf 1. Wrong direction of integration.}
Consider the simplest case that the dimension of the stable subspace
of $A$ is one, so that we seek to resolve a single decaying eigenmode,
with all other modes exponentially growing with increasing $x$.
Evidently, the correct direction of integration is the backwards
direction, from $x=+M$ back to $x=0$, in which the desired mode is
exponentially growing, and errors in other modes exponentially decay.
Integrating in the forward direction would be numerically disastrous,
 with exponential error growth $e^{\eta M}$, 
where $\eta$ is the spectral gap between decaying
and growing modes (recall, $M$ is large):
the analog for Evans function computations of integrating a
backward heat equation.
In general, we must always integrate from infinity toward zero.

\bigskip

{\bf 2. Parasitic modes} (related to 1).
For general systems of equations, the dimension of the stable subspace
of $A$ typically involves two or more eigenmodes, with distinct decay
rates $\mu_1<\mu_2<0$.
Integrating in backward direction as prescribed in part 1 above, we
resolve the fastest decaying $\mu_1$ mode without difficulty.
However, in trying to resolve the slower decaying $\mu_2$ mode,
we experience the problem that errors in the direction of the $\mu_1$
mode grow exponentially relative to the desired $\mu_2$ mode, at
relative rate $e^{(\mu_2-\mu_1)M}$.
That is, parasitic faster-decaying modes will tend to take over slower-decaying 
modes, preventing their resolution.

\br\label{badpar}
Degradation of results from parasitic modes
is of the same rough order as that resulting from integrating in the
wrong spatial direction, differing only in the fact that the maximum
spectral gap between two decaying modes is typically one-half or 
less of the maximum gap between decay and growing modes.
Thus, it is crucial to address this issue.
\er
\bigskip

{\bf 3. Nonequilibrium state.} 
A more subtle problem is that integrating even a single scalar
equation $w'=-aw$, $a\ge 0$, over a long interval $[M,0]$, leads to
(sometimes quite large) accumulation of errors, and, more important,
a large number of mesh points/computations.
The single exception is the equilibrium case $a=0$, which for essentially
all numerical ODE schemes is resolved exactly.

This can be understood more quantitatively
by the following heuristic computation, assuming a 
perfectly adaptive scheme and no machine error.
The truncation error $\tau_j$
 for a $k$th order scheme at step $j$ is proportional to
the $(k+1)$th derivative of the solution times $\Delta x_j^k$, where 
$\Delta x_j$ is the size of the $j$th step, or 
$c_k a^{k+1}e^{-ax_j}\Delta x_j^k$.
Taking $\tau_j \sim TOL$ for some fixed tolerance $TOL$, we thus obtain
$c_k a^{k+1}e^{-ax_j}\Delta x_j^k \sim TOL$, or
$$
 \frac{\Delta x_j}{\Delta j} \sim c_k^{-1/k}  TOL^{1/k}\times a^{-1-1/k}e^{ax_j/k}.
$$
Inverting, and integrating $\frac{\Delta j}{\Delta x_j}\approx 
\frac{dj}{dx}$ from $0$ to $M$,\footnote{
Direction of integration is symmetric for a single mode.
}
we obtain an estimate
\be\label{Jpred}
J\sim c_k^{1/k}
TOL^{-1/k} k a^{1/k}
\int_0^M
(a/k) e^{-ax_j/k}
\sim c_k^{1/k} 
TOL^{-1/k} k a^{1/k}
\ee
as $M\to +\infty$ for the total number $J$ of mesh blocks,
which goes to zero as $a\to 0$ and to infinity as $a\to \infty$.

Though it is tempting to think of this as 
an example of numerical stiffness, 
that is not the case, since the problem involves but a single mode.
It seems rather to be a secondary, previously unremarked, phenomenon,
that in usual circumstances would be neglible.
For Evans function computations, however, we observe a difference
in computational efficiency of an order of magnitude or more between the cases
$a=1$ and $a=0$, for $TOL\sim 10^{-6}$.

%

\br
It would be iteresting to compare \eqref{Jpred} to results for the
standard RK45 scheme with which most Evans computations have been done
applied to the constant coefficient scalar problem $w'=-aw$,
in particular the $a^{1/4}$ rate.
For variable-coefficient systems, additional effects 
having to do with ``conjugation errors'' appear as well; 
see Section \ref{numgap}.
\er

\bigskip

\subsubsection{Solution one: the centered exterior product method}\label{solution1}

Problem 1 is easily avoided by integrating in the correct direction.
Problem 2 may be overcome by working in the exterior product space 
$W_1^+\wedge \cdots \wedge W_k^+ \in \CC^{\binom{n}{k}}$ 
(resp. $W_{k+1}^-\wedge \cdots \wedge W_n^- \in \CC^{\binom{n}{n-k}}$), 
for which the desired subspace appears as a single, maximally stable 
(resp. unstable) mode, the Evans determinant then being recovered 
through the isomorphism
\be\label{exteval}
\det\begin{pmatrix} W_1^+ & \cdots & W_k^+&W_{k+1}^- & \cdots & 
W_n^-\end{pmatrix}
\sim
(W_1^+\wedge \cdots \wedge W_k^+) \wedge (W_{k+1}^-\wedge \cdots \wedge W_n^-);\\
\ee
see \cite{AS,Br,BrZ,BDG,AlB} and ancestors \cite{GB,NR1,NR2,NR3,NR4}.
This reduces the problem to the case $k=1$.
Problem 3 can then be avoided by factoring out the expected asymptotic
decay rate $e^{\mu x}$ of the single decaying mode and solving the
``centered'' equation
\be\label{centered}
Z'=(A-\mu I)Z, \quad Z(+\infty)=r: \, A_+r=\mu r
\ee
for $Z:= e^{-\mu x}W$, which is now asymptotically an equilibrium as $x\to +\infty$.
With these preparations, one obtains excellent results \cite{BDG,HuZ1};
however, omitting any one of them leads to a loss of efficiency of
at least an order of magnitude in our experience \cite{HuZ2}.

\subsubsection{Solution two: the polar coordinate method} \label{solution2}

Unfortunately, the dimension $n\choose k$ of the phase space for the
exterior product grows exponentially with $n$, since $k$ is 
$\sim n/2$ in typical applications.
This limits its usefulness to $n\le 10$ or so, whereas the Evans
system arising in compressible MHD is size $n=15$, $k=7$ \cite{BHZ},
giving a phase space of size ${n\choose k}=6,435$: clearly impractical.
A more compact, but nonlinear, alternative is the polar coordinate
method of \cite{HuZ1}, in which the
exterior products of the columns of $W_\pm$ are represented in
``polar coordinates'' $(\Omega, \gamma)_\pm$, 
where the columns of $\Omega_+ \in \CC^{n\times k}$ 
and $\Omega_- \in \CC^{(n-k)\times k}$ are orthonormal bases 
of the subspaces spanned by the columns of 
$W_+:= \begin{pmatrix} W_1^+ & \cdots & W_k^+ \end{pmatrix}$ and 
$W_-:= \begin{pmatrix} W_{k+1}^- & \cdots & W_n^- \end{pmatrix}$, $W_j^\pm$ 
defined as in \eqref{evansdef}, i.e., $W_+=\Omega_+ \alpha_+$, 
$W_-=\Omega_- \alpha_-$, and $ \gamma_\pm:= \det \alpha_\pm$,
so that
\[
W_1^+\wedge \cdots \wedge W_k^+ \wedge= \gamma_+ 
(\Omega_+^1\wedge \cdots \wedge \Omega_+^k),
\]
where $\Omega_\pm^j$ denotes the $j$th column of $\Omega_\pm$,
and likewise
$
W_{k+1}^-\wedge \cdots \wedge W_n^-=\gamma_- 
(\Omega_-^1 \wedge \cdots \wedge \Omega_-^{n-k}).
$

This yields the block-triangular system
\begin{equation}\label{Omegaeq}
\begin{aligned}
\Omega'&= (I-\Omega\Omega^*) A \Omega, \\
(\log \tilde \gamma)'&= \trace (\Omega^*A\Omega)
- \trace (\Omega^*A\Omega)(\pm \infty),
\end{aligned}
\end{equation}
$\tilde \gamma :=\tilde \gamma e^{- \trace (\Omega^*A\Omega)(\pm \infty)x}$,
for which the ``angular'' $\Omega$-equation is exactly
the continuous orthogonalization method of Drury \cite{Dr,Da},
and the ``radial'' $\tilde \gamma$-equation, given $\Omega$, may be solved by 
simple quadrature.
Ignoring the numerically trivial radial equation, we see that problem
2 by fiat does not occur.
Likewise, for constant $A$, it is easily verified that invariant subspaces
$\Omega$ of $A$ are equilibria of the flow, so problem 3 does not occur.
Indeed, for $A$ constant, solutions of the $\tilde \gamma$-equation are 
constant, so that any first-order or higher numerical scheme 
resolves $\log \tilde \gamma$ exactly;
thus, the $\tilde \gamma$-equation may for simplicity
be solved together with the $\Omega$-equation, with no need for 
a final quadrature sweep.\footnote{See Section \ref{polarinit} for
numerical prescriptions $(\Omega,\tilde \gamma)_\pm$ of $(\Omega, \tilde \gamma)$ at $\pm \infty$.}
The Evans function is recovered, finally, through the relation
\be\label{polarevans}
D(\lambda)=
\det\begin{pmatrix} W_1^+ & \cdots & W_k^+&W_{k+1}^- & \cdots & 
W_n^-\end{pmatrix}|_{x=0}=
\tilde \gamma_+\tilde \gamma_-\det(\Omega^+,\Omega^-)|_{x=0}.
\ee


\subsection{Further questions and description of results}

The discussion of the previous subsection leaves open some
important questions.
First, can we justify this heuristic discussion with rigorous,
quantitative error bounds for the actual, variable coefficient
problems that occur in practice?
In particular, we note that centering equations as in Section 
\ref{solution1} goes counter to the intuition afforded by standard
two-point boundary-value theory on intervals $[0,M]$ as
$M\to \infty$ \cite{Be1},
which asserts convergence error of order $e^{-\eta M}$ where
$\eta$ is the minimum spectral gap of decaying (resp. growing)
modes from zero to the solution on $[0,+\infty)$.
Applied blindly to the centered equations, this would predict
{\it nonconvergence} rather than the good behavior observed 
in practice.

Second, there is a well-known problem of instability
of the continuous orthogonalization method with respect to
perturbations disturbing the assumed orthonormal structure of $\Omega$
\cite{Da,BrRe}.
In the language of \cite{BrRe}, the {\it Stiefel manifold}
$\cS:=\{\Omega:\, \Omega^*\Omega=I_k\}$
of orthonormal matrices is preserved by the flow of \eqref{Omegaeq},
but is typically neither attracting nor repelling. 
In view of Remark \ref{badpar}, this should lead to terrible
results for Evans function computations.  This issue was discussed
at length in \cite{HuZ1}, with numerous different
solutions discussed, from artificial stabilization to geometric integration.
Yet, surprisingly, the method that performed best was the original
Drury algorithm with no stabilization, implemented by a standard
RK45 scheme. 
This yielded results quite similar to those of the exterior product scheme,
which seems to contradict the conclusions of Remark \ref{badpar}.
\medskip

{\bf Result I.}
Our first main result is to establish a numerical version of the
gap lemma of \cite{GZ,KS}, which states that, ignoring machine error,
provided the coefficient
matrix $A(x,\lambda)$ is uniformly exponentially convergent as
$x\to +\infty$, with rate $|A-A_+|\le Ce^{-\theta x}$
for $x\ge 0$, and provided the gap between $\mu$ minimum real part of the
eigenvalues of $A_+$ is strictly greater than $-\theta$, 
then the solution of problem \eqref{centered} on $[0,M]$ 
initialized as $Z(M)=r$
converges as $M\to \infty$ to the solution on $[0,+\infty)$ at
rate $C(\tilde \theta, \theta)e^{-\tilde \theta M}$ 
for any $0<\tilde \theta<\theta$.
This resolves the first issue, explaining the observed convergence of
the centered exterior product method.

Completing the analogy to \cite{GZ},
we establish a corresponding result for general centered 
two-point boundary problems \eqref{centered}
with projective boundary conditions on $[0,M]$, 
under the assumption that the gap $\gamma$ 
between $\mu$ minimum real part of the
eigenvalues of $A_+$ {associated with the projective boundary condition
at $M$} is strictly greater than $-\theta$, 
obtaining convergence at 
the same
rate $C(\tilde \theta, \theta)e^{-\tilde \theta M}$ 
as $M\to +\infty$ for any $0<\tilde \theta<\theta$.
This could be viewed as a type of superconvergence, as the standard
theory \cite{Be1} predicts convergence at rate $e^{-\gamma x}$, with
a nonpositive spectral gap $\gamma \le 0$ corresponding to ill-conditioned
boundary conditions.
For detailed statements and proofs, see Section \ref{numgap}.
\medskip

{\bf Result II.}
Our second main result is to explain the apparent contradiction between
observed good results for the polar coordinate method \cite{HuZ1} and
the well-known instability of continuous orthogonalization \cite{Da,BrRe}.
The simple resolution is that, though continuous orthogonalization 
{\it is} in general unstable, it is in the present context stable!

Heuristically, this is quite simple to see.
Intuitively, it is is clear that the stable manifold of $A_+$
is asymptotically attracting in backward $x$ under the flow
of \eqref{Omegaeq} for $\Omega$ 
confined to the Stiefel manifold $\cS=\{\Omega:\, \Omega^*\Omega=I_k\}$,
and in fact this is well known (see Section \ref{tan}).
Thus, we need only verify that the Stiefel manifold, likewise, is
attracting in backward $x$.
Defining the Stiefel error $\E(\Omega):=\Omega^*\Omega-I$,
we obtain after a brief computation the error equation
\begin{equation}\label{druryerror}
{\cal{E}}'= -  {\E} (\Omega^* A\Omega) -  (\Omega^* A\Omega)^*\E
\end{equation}
of \cite{HuZ1}.
Linearizing about the exact solution $\bar \Omega\to \Omega_+$,
$\bar \cE \to 0$ and replacing coefficients by their asymptotic limits,
we obtain a linear equation
$\cE'=\cA_+\cE$, where $\cA\cE:=-\cE\tilde A_+-(\tilde A_+)^*\cE$
is a Sylvester operator, $\tilde A_+:=(\Omega_+^* A_+\Omega_+)$, 
with eigenvalues and eigenmatrices 
$a_j+a_k^*$, $r_j r_k^*$, where $a_j$ and $r_j$ are eigenvalues and
eigenvectors of $\tilde A_+$.
Noting that the eigenvalues of $\tilde A_+$ are exactly the eigenvalues
of $A_+$ restricted to its stable subspace, i.e., the stable eigenvalues
of $A_+$, we find that $\cA_+$ has positive real part eigenvalues,
and so $\cE$ decays in backward $x$.

That is, {\it the Stiefel manifold is attracting under the backward
flow of continuous orthogonalization (repelling under the forward
flow) if $\Omega$ is a stable subspace of $A$}, an observation that
previously seems to have gone unremarked.
This confirms that, as suggested by numerical results of \cite{HuZ1},
continuous orthogonalization is roughly equally well-conditioned as
the centered exterior product method.
For details and further discussion, see Section \ref{orthstab}.

\br\label{nogeo}
An important consequence is that geometric integrators
like those suggested in \cite{BrRe} for general Orr--Sommerfeld applications
are probably not worth the trouble for Evans function computations,
since the Drury method is stable and much simpler to code.
\er

\br
The generalized inverse method, or {\it Davey method} \cite{Da}
$\Omega'= (I-\Omega\Omega^\dagger) A \Omega$,
where $\Omega^\dagger:= (\Omega^* \Omega)^{-1}\Omega^*$ denotes the 
generalized inverse, exhibits neutral error growth $\E'= 0$,
so is often used as a stabilization of the basic continuous orthogonalization
method \eqref{Omegaeq}(i) of Drury.
In the context of Evans function computations, the behavior is slightly worse,
however \cite{HuZ1}.  This can now be understood from the fact that the Drury
method actively damps errors in the Evans function context.
The fact that the Drury method outperformed the Davey method (and all others)
was a mysterious aspect left unresolved in \cite{HuZ1}.
\er

\medskip

{\bf Result III.}
For equations arising in complicated physical 
systems or through transverse discretization of a multi-dimensional 
problem on a cylindrical domain \cite{LPSS}, 
the dimension $n$ can be very large.
For example, for the multidimensional systems considered in \cite{LPSS},
$n=8M\sim 48$ (see p. 1447, \cite{LPSS}) 
where $M\sim 6$ is the number of transverse Fourier modes being computed.
The development of numerical methods suitable for efficient
Evans function computations for large systems has been cited 
by Jones and others as one of the key problems facing the traveling-wave 
community in the next generation \cite{J}. 

For large dimensions, the accumulation of errors associated with shooting
methods appears potentially problematic, and so various other options
have been considered.
For example, one may always abandon the Evans function formulation and 
go back to direct discretization/Galerkin techniques, hoping to 
optimize perhaps by multi-pole type expansions on a problem-specific basis.  
However, this ignores the useful structure, and associated dimensionality 
reduction, encoded by existence of the Evans function.\footnote{
See however \cite{GLZ}, which points out a useful intermediate
structure based on Fredholm determinants.}

Alternatively, Sandstede \cite{S} has suggested to work within the Evans 
function formulation, but, in place of the high-dimensional shooting 
methods described above, to recast \eqref{firstorder} as a 
boundary-value problem with appropriate projective boundary conditions, 
which may be solved in the original space $\CC^n$ for individual 
modes by robust and highly-accurate boundary-value/continuation techniques.  

Problems with this scheme as conventionally implemented in uncentered
coordinates are two.
First, solutions exponentially decay as $x\to \pm\infty$, so that
the direct connection to data at $\infty$ of \eqref{centered} is lost;
as a consequence, up to now, it is not known how to recover analyticity 
of the Evans function by such a scheme.
Second, since the uncentered problem involves decaying
modes as well as growing modes, there must be provided boundary
conditions at $x=0$ as well as at $x=\pm M$; indeed, it is the
boundary conditions at $x=0$ that mainly determine the decaying modes we
seek.
Since behavior of \eqref{firstorder} is only known near its asymptotic
limits as $x\to \pm \infty$, there appears to be no analytic way to 
prescribe a priori well-conditioned projective boundary conditions at
$x=0$ (or, as mentioned already, to relate these to a desired asymptotic
behavior as $x\to \pm \infty$), and so apparently these must be adjusted 
``on the fly'' by trial and error, a process that requires error checks and 
additional complications in program structure.

As pointed out in \cite{HuZ1} (last sentence of introduction),
using the polar coordinate method-- or any centered scheme--
as the basis of a boundary-value scheme eliminates immediately the
first problem, of preserving analyticity, since in the centered format
date is explicitly described as $x\to \pm \infty$.
The second problem in general remains.
However, by the remarkable stability property recorded in result II,
the polar coordinate method involves only modes that are neutral or
growing as $x\to \pm \infty$, and not decaying; that is, it is 
both {\it centered} and {\it one-sided}.
The centered exterior product method though dimensionally unsuitable
shares this property as well; indeed, it is precisely the one-sided
property that makes these schemes suitable for shooting.
For such schemes, appropriate projective boundary conditions
by result I are 
full Dirichlet conditions as $x\to \pm \infty$, {\it with
no boundary conditions at $x=0$}, and thus the second, essentially
logistic, problem does not either arise.

{We therefore propose the polar coordinate method with Dirichlet 
conditions at $x=\pm M$ as a promising candidate for boundary-value-based
Evans function computations}, noting in particular that it is essentially
trivial to program given an existing shooting code.
The only disadvantage that we immediately see is the nonlinearity of
the scheme.
We propose at the same time
an alternative linear, centered but two-sided, scheme based
on the conjugation lemma of \cite{MeZ,Z1}, an extension of
the gap lemma that is our main tool in the analysis.
\medskip

\subsection{Discussion and open problems}\label{conclusions}

We gather in this paper a complete prescription
together with rigorous error bounds
for efficient numerical Evans function computations
by the shooting methods of \cite{HuZ1,HLZ,HLyZ1,BHZ}, etc., 
of systems up to the intermediate size $n\sim 20$ or so
encountered in one- and multi-dimensional problems of continuum
mechanics, and propose some promising boundary-value methods for
further exploration in computations for ultra-large systems
of size $n\sim 50$ and up.
In the process, we rehabilitate the
continuous orthogonalization method of Drury, explaining its
unexpectedly good performance in the context of Evans function
computations.
Finally, and most important, we point out 
the importance of centered coordinates, in contrast to
standard numerical intuition and practice in the study
of boundary-value problems on unbounded domains \cite{Be1},
establishing the related stability/superconvergence principle
embodied by our numerical gap lemma.

%
The latter result
seems to suggest larger implications in the construction of numerical 
boundary-value schemes.
%
At the same time, it serves to clarify some up-to-now rather
confusing existing results.
For example, in the seminal work \cite{Er}, Erpenbeck performed a numerical
Evans function analysis of stability of ZND detonation waves
by a method obeying principle 1 and 2 of Section \ref{pitfalls}. 
Much later, Lee and Steward \cite{LS} introduced what is now the effective
standard method in detonation literature, obeying
principle 2 but violating principle 1, reporting an apparently 
counter-intuitive improvement in speed of computation.  
The explanation of this paradox is that Erpenbeck carried out
his computations in uncentered coordinates, whereas
Lee and Steward, by mapping to a bounded interval
and applying a singular integral solver 
effectively centered their equations,
factoring out the principal dynamics.
Thus, there is a cancellation of errors involved, that
cannot be seen without reference to principle 3.
A centered version of Erpenbeck's original method 
appears to outperform both schemes by an order
of magnitude; see \cite{HuZ2} for further discussion/simplifications.

The main mathematical interest of our
numerical convergence results is their application to two-sided
boundary-value schemes posed on the entire interval $[0,M]$.
Indeed, for shooting methods, a routine translation into the discrete
setting of the continuous gap lemma yields the result, whereas
the general case involves a more subtle argument based on approximate
conjugation to constant-coefficients; see Remark \ref{pfrmk}.
The determination of realistic mesh requirements 
for centered boundary-value schemes, and the question 
of whether or not centered boundary-value schemes yield 
in practice
the same good performance observed for centered shooting schemes,
remain important open problems.

\section{Preliminaries: the gap and conjugation lemmas}\label{prelim}

We begin by recalling the standard gap and conjugation lemmas
of \cite{GZ,KS} and \cite{MeZ}.
Consider a general family of first-order ODE 
\begin{equation}
\label{gfirstorder}
\WW'-{\mathbb A}(x, \Lambda)\WW=0
\end{equation}
indexed by a parameter $\Lambda \in \Omega \subset \CC^m$, where
$W\in \CC^N$, $x\in \RR$ and ``$'$'' denotes $d/dx$.
Assume

{(h0) } Coefficient ${\mathbb A}(\cdot,\Lambda)$, considered
as a function from $\Omega$ into $C^0(x)$
is analytic in $\Lambda$.
Moreover, ${\mathbb A}(\cdot, \Lambda)$ approaches
exponentially to limits $\mA_\pm$ as $x\to \pm \infty$, 
with uniform exponential decay estimates
\begin{equation}
\label{expdecay2}
|(\partial/\partial x)^k(\mA- \mA_\pm)| 
\le Ce^{-\theta|x|}, \, 
\quad
\text{\rm for } x\gtrless 0, \, 0\le k\le K,
\end{equation}
$C$, $\theta>0$, 
on compact subsets of $\Omega $.
\medbreak

\begin{lem}[{The gap lemma [GZ, ZH]}]
\label{gaplemma}
Assuming (h0),
if $V^-(\Lambda)$ is an 
eigenvector of $\mA_-$ with eigenvalue $\mu(\Lambda)$, both 
analytic in $\Lambda$, 
then there exists a solution of \eqref{gfirstorder} of form
\begin{equation}
 \WW(\Lambda, x) = V (x,\Lambda ) e^{\mu(\Lambda) x},
\end{equation}
where $V$ is $C^{1}$ in $x$ and locally analytic in $\Lambda$ and,
for any fixed $\btheta < \theta$, satisfies 
\begin{equation}
\label{3.6g}
V(x,\Lambda )=  V^-(\Lambda ) + O (e^{-\bar \theta|x|}|V^- (\Lambda)|),\quad x < 0.
\end{equation}
\end{lem}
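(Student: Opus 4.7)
The plan is to make the ansatz $\WW = V(x,\Lambda)e^{\mu(\Lambda)x}$, reduce the problem to a linear Duhamel integral equation on $(-\infty,0]$, and solve it by a contraction mapping in a weighted sup-norm space. The essential novelty over the classical stable/unstable manifold construction is that we do \emph{not} project onto spectral subspaces of $L:=\mA_--\mu I$; instead, the coefficient decay rate $\theta$ in (h0) is traded against the spectral spread of $L$, which is the phenomenon from which the lemma takes its name.

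Substituting the ansatz into \eqref{gfirstorder} yields $V' = LV + R(x,\Lambda)V$, where $R := \mA - \mA_-$ satisfies $|R(x,\Lambda)|\le Ce^{\theta x}$ for $x\le 0$ locally uniformly in $\Lambda$. Writing $V = V^- + Z$ and using $LV^- = 0$ gives
$$ Z' = LZ + R(x)\bigl(V^-(\Lambda) + Z\bigr), \qquad Z(x)\to 0 \text{ as } x\to -\infty, $$
which I would recast as the fixed-point equation
$$ Z(x) = \int_{-\infty}^x e^{L(x-y)}R(y)\bigl(V^-(\Lambda) + Z(y)\bigr)\,dy =: T(Z)(x), $$
automatically encoding the asymptotic condition at $-\infty$ without splitting off any projection.

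The main step is to show $T$ contracts on a ball in the weighted Banach space $X_{M_0}:=\{Z\in C((-\infty,M_0],\CC^N)\colon \|Z\|_{M_0}:=\sup_{x\le M_0}e^{\btheta|x|}|Z(x)|<\infty\}$ for $M_0$ sufficiently negative. Using $\|e^{Lt}\|\le C_\epsilon e^{(\beta+\epsilon)t}$ for $t\ge 0$ (with $\beta$ the spectral abscissa of $L$) and $|R(y)|\le Ce^{\theta y}$, a direct bookkeeping yields $e^{\btheta|x|}|T(Z)(x)|\le C'|V^-|e^{(\theta-\btheta)x}+C''\|Z\|_{M_0}e^{\theta x}$ for $x\le M_0$, with constants depending on $(\btheta,\theta,\beta,\epsilon)$. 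Both exponentials tend to zero as $x\to-\infty$, so pushing $M_0$ to $-\infty$ makes the prefactor of $\|Z\|_{M_0}$ arbitrarily small, giving the required contraction on a ball of radius $O(|V^-|)$. The unique fixed point on $(-\infty,M_0]$ extends to $(-\infty,0]$ by standard linear ODE existence.

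Analyticity of the fixed point in $\Lambda$ follows automatically: $T$ depends analytically on $\Lambda$ through $\mA_-$, $\mu$, and $V^-$, and uniform convergence of the Picard iterates preserves analyticity via Morera and Cauchy estimates; $C^1$ regularity in $x$ is immediate from the ODE; the decay bound \eqref{3.6g} is precisely $\|Z\|_{M_0}\le C|V^-|$. The crux is the spectral bookkeeping in the contraction estimate: the integrals converge at $-\infty$ only when $\theta > \beta + \epsilon$, so the coefficient decay rate must dominate the worst-case growth of $e^{Lt}$ — this is what supplies the ``missing'' spectral gap for $\mu$ within $\sigma(\mA_-)$ and justifies the use of an un-projected Duhamel formula.
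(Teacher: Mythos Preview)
Your argument has a genuine gap: the unprojected Duhamel formula
\[
Z(x)=\int_{-\infty}^x e^{L(x-y)}R(y)\bigl(V^-+Z(y)\bigr)\,dy
\]
only makes sense under the extra hypothesis you identify at the end, namely $\theta>\beta+\epsilon$ where $\beta$ is the spectral abscissa of $L=\mA_--\mu I$. But the lemma as stated carries \emph{no} such hypothesis: $\mu$ is an arbitrary eigenvalue of $\mA_-$, and there may well be other eigenvalues $\tilde\mu$ of $\mA_-$ with $\Re\tilde\mu>\Re\mu+\theta$, so that $\beta>\theta$ and your integral diverges at $-\infty$. The very point of the gap lemma (see Remark~\ref{gapexplanation}) is that it holds without any spectral separation assumption whatsoever; you have proved only the special case in which the $Q$-projection of the paper's proof is empty (cf.\ Remark~\ref{bettertheta}).

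The missing idea is precisely the spectral splitting you chose to avoid. The paper fixes $\btheta<\theta_1<\theta$, takes $P$ to be the $\mA_-$-invariant projection onto eigenspaces with $\Re\tilde\mu<\Re\mu+\theta_1$ and $Q$ the complementary projection, and writes the fixed-point operator with the $P$-part integrated from $-\infty$ (where the semigroup bound $|e^{(\mA_--\mu I)t}P|\le Ce^{\theta_1 t}$ for $t>0$ makes the integral converge) and the $Q$-part integrated from a finite base point $-M$ (where $|e^{(\mA_--\mu I)t}Q|\le Ce^{\theta_1 t}$ for $t<0$ controls growth). Contraction then comes from taking $M$ large, not from any restriction on $\sigma(\mA_-)$. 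Without this two-sided integral structure, there is no way to handle the fast-growing modes of $e^{Lt}$ and the proof cannot be completed in the generality stated.
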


\begin{proof}
Setting $\WW(x)=e^{\mu x}V(x)$, we may rewrite $\WW'=\mA \WW$ as 
\begin{equation}
\label{3.7g}
V' = (\mA_- - \mu I)V+\Theta V,
\qquad
\Theta
:= (\mA - \mA_-)=O(e^{-\theta|x|}),
\end{equation}
and seek a solution $V(x,\Lambda )\to V^-(x)$ as $x \to \infty$.
Choose $ \btheta< \theta _1 < \theta $
such that there is a spectral gap 
$|\R \big(\sigma \mA_- - (\mu+\theta_1)\big)|>0$
between $\sigma \mA_-$ and $\mu + \theta_1$.
Then, fixing a base point $\Lambda_0$, we can define on some neighborhood
of $\Lambda_0$ to the complementary $\mA_-$-invariant projections
$P(\Lambda)$ and $Q(\Lambda)$ where $P$ projects onto the direct sum
of all eigenspaces of $\mA_-$ with eigenvalues $\Tmu$ 
satisfying 
$ \R(\Tmu) < \R(\mu) + \theta_1, $
and $Q$ projects onto the direct sum of the remaining eigenspaces,
with eigenvalues satisfying 
$ \R(\Tmu)  > \R(\mu) +\theta _1.  $
By basic matrix perturbation theory (eg. [Kat]) it follows that 
$P$ and $Q$ are analytic in a neighborhood of $\Lambda_0$,  
with
\begin{equation}
\label{3.10g}
\left|e^{(\mA_- - \mu I)x} P \right| 
\le C (e^{\theta_1 x}),
\quad x>0, 
\qquad
\left|e^{(\mA_- - \mu I)x} Q \right| 
\le C (e^{\theta_1 x}), 
\quad x<0.
\end{equation}

It follows that, for $M>0$ sufficiently large, the map $\CalT$ defined by 
\begin{equation}
\label{3.11g}
\begin{aligned}
\CalT V(x) 
&= V^- + \int^x_{-\infty} e^{(\mA_- - \mu I)(x-y)} P
\Theta (y) V(y) dy \\
&\quad - \int^{-M}_x e^{(\mA_- - \mu I)(x-y)} Q \Theta (y) V(y) dy
\end{aligned}
\end{equation}
is a contraction on $L^\infty(-\infty, -M]$.
For, applying \eqref{3.10g}, we have
\begin{equation}
\label{3.12g}
\begin{aligned}
\left|\CalT V_1 - \CalT V_2 \right|_{(x)} 
&\le C |V_1 - V_2|_\infty 
\bigg(\int^x_{-\infty} e^{\theta_1(x-y)} e^{\theta y} dy 
+ \int^{-M}_{x} e^{\theta _1(x-y)}e^{\theta y}dy\bigg)\\
%
&=C |V_1 - V_2|_\infty 
\frac{e^{\theta_1 x}e^{-(\theta-\theta_1)M}}
{\theta-\theta_1}
< \frac{1}{2} |V_1 - V_2|_\infty.
\end{aligned}
\end{equation}

By iteration, we thus obtain a solution $V \in L^\infty (-\infty, -M]$ of $V = 
\CalT V$ with $V\le C_3|V^-|$; since $\CalT$ clearly preserves analyticity
$V(\Lambda, x)$  is 
analytic in $\Lambda$ as the uniform limit of analytic 
iterates (starting with $V_0=0$). 
Differentiation shows that $V$ is a bounded solution of
$V=\CalT V$ if and only if it is a bounded solution of \eqref{3.7g}.
Further, taking $V_1=V$, $V_2=0$ in \eqref{3.12g}, we obtain from
the second to last inequality that 
\begin{equation}
\label{3.13}
|V-V^-| = |\CalT(V) - \CalT(0)| \le C_2  e^{\btheta x} |V| 
\le  C_4e^{\btheta x}|V^-|,
\end{equation}
giving \eqref{3.6g}.
Analyticity, and the bounds \eqref{3.6g},  
extend to $x<0$ by standard analytic dependence for the initial value
problem at $x=-M$. 
\end{proof}

\begin{rem}\label{gapexplanation}
The title ``gap lemma'' alludes to the fact that we
do not make the usual assumption of
a spectral gap between $\mu(\Lambda)$ 
and the remaining eigenvalues of $\mA_-$, as
in standard results on asymptotic behavior of ODE \cite{Co};
that is, the lemma asserts that exponential
decay of $\mA$ can substitute for a spectral gap.
\end{rem}

\br\label{bettertheta}
In the case $\Re \sigma(\mA_+) >-\theta$, we may take $Q=\emptyset$ 
and $\tilde \theta=\theta$, improving \eqref{3.6g}.
\er
\medbreak

\begin{cor}[{The conjugation lemma \cite{MeZ}}]
\label{conjugation}
Given (h0), there exist locally to any given $\Lambda_0\in \Omega $
invertible linear transformations $P_+(x,\Lambda)=I+\Theta_+(x,\Lambda)$ and 
$P_-(x,\Lambda) =I+\Theta_-(x,\Lambda)$ defined
on $x\ge 0$ and $x\le 0$, respectively,
$\Phi_\pm$ analytic in $\Lambda$ as functions from $\Omega$
to $C^0 [0,\pm\infty)$, such that: 
\medbreak
(i)
For any fixed $0<\btheta<\theta$ and $0\le k\le K+1$,  $j\ge 0$, 
\begin{equation}
\label{Pdecay} 
|(\partial/\partial \Lambda)^j(\partial/\partial x)^k
\Theta_\pm |\le C(j,k)  e^{-\tilde \theta |x|}
\quad
\text{\rm for } x\gtrless 0. 
\end{equation}
\smallbreak
(ii)  The change of coordinates $\WW=:P_\pm \ZZ$,
$\FF=: P_\pm \GG$ reduces \eqref{gfirstorder} to 
\begin{equation} \label{glimit}
\ZZ'-\mA_\pm \ZZ = \GG
\quad
\text{\rm for } x\gtrless 0.
\end{equation}
\end{cor}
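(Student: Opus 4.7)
\medskip
\noindent\emph{Proof plan.} My approach is to reduce the corollary to a matrix-valued application of the gap lemma just proved. Seeking $P_+=I+\Theta_+$ on $x\ge 0$ conjugating $\WW'=\mA\WW$ to $\ZZ'=\mA_+\ZZ$ via $\WW=P_+\ZZ$, the compatibility requirement $P_+'=\mA P_+-P_+\mA_+$ becomes, after subtracting the identity, the inhomogeneous matrix-valued linear ODE
\begin{equation}\label{Thetaeq}
\Theta_+'-\cL\Theta_+=(\mA-\mA_+)(I+\Theta_+),\qquad \cL\Theta:=\mA_+\Theta-\Theta\mA_+,
\end{equation}
subject to $\Theta_+\to 0$ as $x\to+\infty$; the symmetric equation, with $\mA_-$ in place of $\mA_+$ and decay as $x\to-\infty$, will govern $\Theta_-$.

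The key step is to recognize \eqref{Thetaeq}, viewed as a system on $\CC^{n^2}$, as exactly of the form treated in the gap lemma, with asymptotic generator the Sylvester operator $\cL$ (spectrum $\{\alpha_i-\alpha_j:\alpha_i,\alpha_j\in\sigma(\mA_+)\}$) and ``eigenvalue'' $\mu=0$. Fixing $0<\tilde\theta<\theta_1<\theta$ with $-\theta_1\notin\R\sigma(\cL)$ (which is possible since $\R\sigma(\cL)$ is a finite set and the interval $(\tilde\theta,\theta)$ has positive length), I introduce complementary $\cL$-invariant spectral projections $\Pi^s,\Pi^u$ onto subspaces with $\R\alpha<-\theta_1$ and $\R\alpha>-\theta_1$, so that $|e^{\cL t}\Pi^s|\le Ce^{-\theta_1 t}$ for $t\ge 0$ and $|e^{\cL t}\Pi^u|\le Ce^{-\theta_1 t}$ for $t\le 0$. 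Exactly as in \eqref{3.11g}, the integral operator
$$
(\CalT\Theta)(x):=\int_M^x e^{\cL(x-y)}\Pi^s(\mA-\mA_+)(I+\Theta)(y)\,dy-\int_x^\infty e^{\cL(x-y)}\Pi^u(\mA-\mA_+)(I+\Theta)(y)\,dy
$$
is a contraction on $L^\infty([M,\infty),\CC^{n\times n})$ for $M$ large, the contraction estimate being essentially \eqref{3.12g} with $\mu$ replaced by $0$. Iteration starting from $\Theta_+=0$ yields an analytic-in-$\Lambda$ fixed point $\Theta_+$ solving \eqref{Thetaeq}; substituting the pointwise bound back into the integrals gives $|\Theta_+(x)|\le Ce^{-\tilde\theta x}$, and the higher $(j,k)$-derivative estimates \eqref{Pdecay} follow by differentiating \eqref{Thetaeq} in $\Lambda$ and $x$ and bootstrapping.

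The solution is then extended backward to all of $[0,\infty)$ by solving the linear ODE from $x=M$, preserving analyticity and exponential bounds up to harmless multiplicative constants on compact sets. Invertibility of $P_+=I+\Theta_+$ throughout $[0,\infty)$ follows from Jacobi's formula applied to $P_+'=\mA P_+-P_+\mA_+$, giving $(\det P_+)'=\trace(\mA-\mA_+)\det P_+$ and hence $\det P_+(x)=\exp\bigl(-\int_x^\infty\trace(\mA-\mA_+)\,ds\bigr)\ne 0$, since the integral is absolutely convergent by \eqref{expdecay2} and $\det P_+(+\infty)=1$. The construction of $P_-$ on $x\le 0$ is symmetric, and the change of variables $\WW=P_\pm\ZZ$, $\FF=P_\pm\GG$ then transforms $\WW'-\mA\WW=\FF$ into \eqref{glimit}.

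The main obstacle is conceptual rather than computational: the Sylvester operator $\cL$ generically has $0$ in its spectrum (with $n$-dimensional kernel spanned by spectral projectors of $\mA_+$), so no honest spectral gap is available at the target value $\mu=0$. This is precisely the scenario the gap lemma is designed to handle: the exponential decay rate $\theta$ of $\mA-\mA_\pm$ substitutes for the missing gap and determines the best achievable rate $\tilde\theta<\theta$.
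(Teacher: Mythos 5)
Your proof is correct and follows essentially the same route as the paper: derive the matrix ODE $P_+'=\mA P_+-P_+\mA_+$ for the conjugator, recognize its asymptotic generator as the Sylvester operator $\cL\Theta:=\mA_+\Theta-\Theta\mA_+$ with target eigenvalue $\mu=0$, and run the gap-lemma fixed-point argument (the paper simply invokes Lemma \ref{gaplemma} for the vectorized matrix equation, whereas you unpack the same contraction estimates explicitly). The only notable variation is your Liouville-formula computation $\det P_+(x)=\exp\bigl(-\int_x^\infty\trace(\mA-\mA_+)\,dy\bigr)\neq 0$, a clean one-line alternative to the paper's two-step invertibility argument (smallness of $\Theta_+$ for large $x$, plus global solvability of the linear ODE for $P_+^{-1}$ on the remaining compact interval).
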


\br\label{conjrmk}
Equivalently, solutions of \eqref{gfirstorder} may be factored as 
\begin{equation}
\label{Wfactor}
\WW=(I+ \Theta_\pm)\ZZ_\pm, 
\end{equation}
where $\ZZ_\pm$ satisfy the limiting, constant-coefficient 
equations \eqref{glimit} and $\Theta_\pm$ satisfy \eqref{Pdecay}. 
\er

\begin{proof}
Substituting $\WW=P_- Z$ into \eqref{gfirstorder}, equating
to \eqref{glimit}, and rearranging, we obtain the defining equation
\begin{equation}
\label{matrixODE}
P_-'= \mA_-P_- - P_- \mA, \qquad P_- \to I \quad \text{\rm as}
\quad
x\to -\infty.
\end{equation}
Viewed as a vector equation, this has the form
$ P_-'=\CalA P_-, $
where $\CalA$ approaches exponentially as $x\to -\infty$ to its
limit $\CalA_-$, defined by
\begin{equation}
\label{CalA}
\CalA_- P:= 
\mA_-P- P \mA_-.
\end{equation}
The limiting operator $\CalA_-$ evidently has 
analytic eigenvalue, eigenvector pair $\mu\equiv 0$, $P_-\equiv I$,
whence the result follows by Lemma \ref{gaplemma} for $j=k=0$.
The $x$-derivative bounds $0<k\le K+1$ then follow from the ODE and its first
$K$ derivatives,
and the $\Lambda$-derivative bounds from 
standard interior estimates for analytic functions.
Finally, invertibility of $P_-$ follows for $x$ large and negative
from \eqref{Pdecay}
and for $x\le 0$ by global existence of a solution to
$$
(P^{-1})'=-P^{-1}P'P^{-1}
=A_+P^{-1}-P^{-1}A.
$$
A symmetric argument gives the result for $P_+$.

\end{proof}

\section{A numerical gap lemma} \label{numgap}

We now establish the main result of the paper, 
a numerical analog of Lemma \ref{gaplemma}.

\subsection{Continuous problem}\label{contprob}
Consider similarly as in \eqref{gfirstorder} a first-order ODE 
\begin{equation}
\label{nsys}
W'-A(x)W=0,
\quad
W\in \CC^N
\end{equation}
on the half-line $x\in [0,+\infty)$,
assuming exponential convergence
\begin{equation}
\label{expdecay3}
|(\partial/\partial x)^k(A- A_+)| 
\le Ce^{-\theta x }, \, 
\quad
\text{\rm for } x\ge 0, \, 0\le k\le K,
\end{equation}
$C$, $\theta>0$, of $A$ to $A_+$ and asymptotic stationarity
\be\label{kercond}
V_+ \in \kernel A_+.
\ee
Suppose further that there holds the following {\it gap condition}.
\medskip

\begin{ass}\label{gapassum}
$\Sigma_+$ is a $k$-dimensional
invariant subspace of $A_+$
containing all eigenmodes associated with nonnegative real part
eigenvalues, and no eigenmodes with real part $\le -\theta$,
with associated eigenprojection $\Pi_+$.
\end{ass}

\noindent
Let $\Pi_0$ be an arbitrary projection of rank $(n-k)$.

\bl\label{exist}
For generic $\Pi_0$, specifically those satisfying \eqref{lop} below, 
there exists for any $\alpha \in \Range \Pi_0$ 
a unique solution of \eqref{nsys} under the projective boundary conditions
\be\label{nbc}
\Pi_0 W(0)= \alpha, \quad \lim_{x\to +\infty} 
\Pi_+(W(x)-V_+)=0,
\ee
satisfying for any $0<\tilde \theta<\theta$
\be
|(\partial/\partial x)^k
(W(x)-V_+)| \le C(\theta, \tilde \theta) e^{-\tilde \theta x }, \, 
\quad
\text{\rm for } x\ge 0, \, 0\le k\le K.
\ee
\el

\begin{proof}
By Lemma \ref{conjugation}, there exists an invertible 
coordinate transformation
\be \label{factor}
W=(I+ \Theta)Z
\ee
with
\begin{equation}
\label{tdecay} 
| (\partial/\partial x)^k
\Theta |\le C(k) e^{-\tilde \theta |x|}
\quad
\text{\rm for } x\ge 0,
\end{equation}
converting \eqref{nsys}, \eqref{nbc} to
\begin{equation} \label{glimit2}
Z'-A_+ Z = 0
\quad
\text{\rm for } x\ge 0
\end{equation}
and
\be\label{znbc}
\tilde \Pi_0 Z(0)= \tilde \alpha,
\quad 
\lim_{x\to +\infty} 
\Pi_+(Z(x)-V_+)=0,
\ee
where $\tilde \Pi_0:= (I+\Theta(0))^{-1} \Pi_0 (I+\Theta(0))$ 
is again arbitrary and $\tilde \alpha:= (I+\Theta(0))^{-1}\alpha$.
By inspection, there exists a unique solution if and only
if $\kernel \tilde \Pi_0$ is transverse to $\Sigma_+$, i.e.,
there holds the generically satisfied Evans/Lopatinski condition 
\be\label{lop}
 \det \tilde \Pi_0 \tilde \Sigma_+\ne 0 , 
\ee
where $\tilde \Sigma_+$
is the complementary (rank $n-k$) invariant subspace of $\Sigma_+$,
consisting of the sum of the constant solution $Z\equiv V_+$
and the solution of the Cauchy problem $Z(0)=\tilde \alpha- \tilde \Pi_0 V_+$
under the flow of the constant-coefficient equation
\eqref{glimit2} restricted to the subspace $\tilde \Sigma_+$
of exponentially decaying solutions.
\end{proof}

\subsection{Discrete problem}\label{discretized}
We now consider a discretized version of \eqref{nsys}, \eqref{nbc}
on the truncated domain $x\in [0,M]$ with the corresponding
projective boundary conditions
\be\label{dbc}
\Pi_0 W(0)= \alpha, \quad \Pi_+(W(M)-V_+)=0,
\ee
and examine convergence as $M\to +\infty$ and mesh size goes
to zero of the approximate to the exact solution.

\br\label{keyrmk}
In view of the discussion of the introduction, our main
interest in the context of Evans function computations is
the ``Dirichlet'' case $\Sigma_+=\CC^n$ arising for one-sided schemes
suitable for shooting methods: in particular, 
the centered exterior-project method or
the polar coordinate method linearized about a desired exact solution.
In this more favorable case, there is no boundary condition at $x=0$,
and no arbitrary projection $\Pi_0$, and the projective boundary
conditions \eqref{dbc} reduce to the simple Dirichlet condition
$W(M)=V_+$.
\er

\subsubsection{Difference scheme}\label{diffscheme}
We assume a general linear difference scheme of form
\be\label{diff}
\cS \cW= 0,
\ee
with boundary conditions
\be\label{dbc2}
\Pi_0 \cW_0= \alpha, \quad \Pi_+(\cW_J-V_+)=0,
\ee
where $\cW=(\cW_1, \dots, \cW_J)$ are approximations of $W$ at mesh points
$x_j$, $j=0,\dots, J$ and $\cS$ is a linear difference operator with
finite stencil $\{j-\ell, j+\ell\}$, i.e., the value of $(\cS \cW)_j$ depends
on $\cW$ only through $\{\cW_{j-\ell}, \dots, \cW_{j+\ell}\}$.
Moreover, we assume that $(\cS \cW)_j$ depends on $A$ linearly and only
through the restriction of $A$ to the interval $[x_{j-\ell}, x_{j+\ell}]$.
We define boundary and truncation errors $\eps_0$, $\eps_J$ and 
$\tau=(\tau_0, \dots, \tau_J)$ as usual by
\ba\label{truncdef}
\cS \bar \cW= h_j\tau,\quad
\Pi_0 \bar \cW_0-\alpha = \eps_0 , \quad
\Pi_+(\bar \cW_J-V_+)= \eps_J,
\ea
where $\bar \cW_J:=W(x_j)$ denotes the solution of the continuous problem
\eqref{nsys}, \eqref{nbc} on $[0,+\infty)$ sampled at mesh points $x_j$,
and $h_j$ is the $j$th mesh length, defined as the maximum difference
between points $x_k$ involved in the evaluation of $\cS_j$.

This could be a boundary-value scheme, with $\cS$
realized as a large $Jn\times Jn$ banded matrix.
Or, in the case of our main interest $\Sigma=\CC^n$ 
that boundary conditions are imposed
only at one end $x=M$, it could be a backward Cauchy
solver such as RK45, with $\cS$ realized 
as a series of $J-\ell$ successive
$(2\ell+1)n\times (2\ell+1)n$ matrix multiplications.

\subsubsection{Basic assumptions}\label{diffassumptions}
We do not specifiy the details of the scheme, 
other than to make 
certain mild assumptions.

\begin{assums}\label{stabassums}
{ }

{\it (i) $k$-th order consistency}: As mesh size $h_j\to 0$, 
\be\label{trunc}
|\tau_j|\le C
h_j^k \sup_{z\in[x_{j-\ell}, x_{j+\ell}]}|\partial_x^{k+1}W(z)| \to 0.
\ee

{(ii) \it Strict constant-coefficient stability}: 
In the {constant-coefficient case} $A\equiv A_+$,
if $\Re \sigma(A_+)\le \mu$, then, for any $\tilde \mu>\mu$, 
Cauchy information
$\cS\cW=\tau h$ and $\cW_0=\eps_0$ imply 
(under appropriate mesh restrictions)
\be\label{stabassumption}
|\hat \Delta^r \cW_j|\le C\sup_{j-r\le m\le j} |h_m|^r \big(
\eps_0 e^{\tilde \mu x_j} 
+ \sum_{0\le k\le j}  e^{\tilde \mu (x_j-x_k)}\tau_k h_k \big)
\ee
for $0\le r\le 2$,
where $\hat \Delta$ is the backward difference operator
$(\hat \Delta f)_j:=f_j-f_{j-1}$.
Likewise, if $\Re \sigma(A_+)\ge \mu$, 
then, for any $\tilde \mu< \mu$, 
$\cS\cW=\tau h$ and $\cW_J=\eps_J$ imply 
\be\label{stabassumption2}
|\hat \Delta^r \cW_j|\le C \sup_{j-r\le m\le j}|h_m|^r \big(
\eps_J e^{\tilde \mu (x_j-x_J)} 
+ \sum_{j\le k\le J}  e^{\tilde \mu (x_j-x_k)}\tau_k h_k \big).
\ee
In the case $\Sigma=\CC^n$ of our main interest, 
we require only \eqref{stabassumption2}
and only for $\mu\le 0$.\footnote{
This is all that is needed in any case for \eqref{stabassumption2},
but this seems to give no advantage in the general case since
there remains the more restrictive assumption \eqref{stabassumption}
in the other direction.
} 
\end{assums}

As is customary, we ignore machine error.

\br\label{validity}
Condition (i) is of course standard.
In the situation of our main interest of a backward Cauchy solver 
in the case $\Sigma=\CC^n$, condition (ii)
(in this case, the single condition \eqref{stabassumption2}) 
for $r=0$ is equivalent
by Duhamel's principle/variation of constants to the homogeneous stability
condition
$$
|\cW_j|\le C \eps_k e^{\tilde \mu (x_j-x_k)}
$$ 
for $x_j<x_k$, $\tilde \mu\le 0$, when $\cS\cW=0$ and $\cW_k=\eps_k$, 
which is related to a circle of ideas including 
$A$-stability and one-sided Lipshitz bounds \cite{D1,D2,HNW1,HNW2}
regarding
approximation of stiff ODE.
For general schemes it may impose impractically severe
limitations on the mesh size; however, it is satisfied 
without such
conditions for 
RK45 and other ``nice'' schemes, 
as may be seen by applying a constant linear
coordinate transformation $A \to \tilde A:=SAS^{-1}$
for which $\Re \tilde A:=(1/2)(\tilde A+ \tilde A^*)\ge \check \mu>\mu$
with $\check \mu$ arbitrarily close to $\mu$, then applying the results
of \cite{D1,D2,HNW1,HNW2}.
For some simple examples, see Remark \ref{egstab}.
Likewise, for Cauchy solvers (ii) ($r=1,2$) follows 
using the difference scheme from (ii) ($r=0$).
For general boundary-value schemes, (ii) must be checked on
a scheme-by-scheme basis.
For general theory on  
stability of boundary-value schemes, see, e.g., \cite{Kr,Be1,Be2}. 
\er

\br\label{dualrmk}
Evidently, (ii) implies also the dual bounds
\be\label{dual1}
|\cW_j|\le C \big( \eps_0 e^{\tilde \mu x_j} 
+ 
\sup_{j-r\le m\le j} |h_m|^r 
\sum_{0\le k\le j}  e^{\tilde \mu (x_j-x_k)}\tau_k h_k \big)
\ee
and
\be\label{dual2}
|\cW_j|\le C 
\big( \eps_J e^{\tilde \mu (x_j-x_J)} 
+ 
\sup_{j-r\le m\le j}|h_m|^r 
\sum_{j\le k\le J}  e^{\tilde \mu (x_j-x_k)}\tau_k h_k \big)
\ee
for differenced data $\cS\cW=\Delta^r(\tau h)$, $0\le r\le 2$,
where $\Delta$ denotes the forward difference operator
$(\Delta f)_j:=f_{j+1}-f_{j}$.
\er

\subsection{Discrete conjugation error} \label{discon}

We now make the key observation that 
{\it the conjugating transformation
for the continuous problem, up to a small commutation error,
is a conjugator for the discrete problem as well.}

\begin{example}\label{Eulereg}
\textup{
Consider the first-order (forward explicit/backward implicit)
Euler scheme
$$
(\cS\cW)_j:=\cW_{j+1}- \cW_j -  h_j A_j \cW_j=0.
$$
Substituting $\cW_j=:P_j \cZ_j$, $P_j:=P(x_j)$, where $P=I+\Theta$ is
the continuous conjugator of \eqref{factor},
we obtain after a brief calculation
$$
(\tilde \cS\cZ)_j:=\cZ_{j+1}- \cZ_j -  h_j A_{+j} \cZ_j=
(\Psi \cZ)_j,
$$
where $\tilde S$ is the realization of the same first-order
Euler scheme to the constant-coefficient case $A\equiv A_+$ and
$\Psi$ is the commutator error
$$
(\Psi \cZ)_j):= -h_jP_j^{-1}\Big(P_{j+1}-P_j -h_j(A_jP_j-P_{j+1}A_{+j})\Big)\cZ_j.
$$
Noting that
$ \cP_{j+1}-\cP_j = h_j(A_j\cP_j-\cP_{j+1}A_{+j}) $
is a discretization of the ODE
$P'=AP-PA_+$ defining $P$, \eqref{matrixODE},
we find similarly as in \eqref{trunc} that  the exact solution
$P$ has truncation error
$$
\hat \tau_j:=
 h_j^{-1}\Big(P_{j+1}-P_j - h_j(A_jP_j-P_{j+1}A_{+j}) \Big)
= O(|h_j| \sup_{x\in [x_{j-\ell}, x_{j+\ell}]}|P''|)
=O(|h_j| e^{-\tilde \theta x}),
$$
we find that 
\be\label{lastest}
|(\Psi \cZ)_j|\le 
C |h_j|^2e^{-\tilde \theta x_j} |\cZ_j|.
\ee
}
\end{example}

\br
Note that the righthand side of \eqref{lastest} is smaller by a factor $h_j$ 
than the corresponding estimate
$$
(\tilde S\cW)_j=O(|h_j|\sup_{j-\ell\le m\le j+\ell}|A_m-A_{m+}|)
\sup_{j-\ell\le m\le j+\ell}|\cW_m|
\le C |h_j|e^{-\theta x_j} \sup_{j-\ell\le m\le j+\ell} |\cW_m|.
$$
obtained by separating out constant-coefficient and exponentially
decaying parts in the original equation for $\cW$.
This additional factor is crucial in the contraction
argument of \S \ref{numconv}.
\er

\begin{example}\label{Mideg}
\textup{
Consider the second-order (forward/backward implicit) midpoint scheme
\be\label{mid}
(\cS\cW)_{j+1}:=\cW_{j+1}- \cW_{j-1} - (x_{j+1}-x_{j-1}) A_j \cW_j=0.
\ee
Substituting $\cW_j=:P_j \cZ_j$, $P=I+\Theta$ as in \eqref{factor},
we obtain 
$$
(\tilde \cS\cZ)_j:=\cZ_{j+1}- \cZ_{j-1} - (x_{j+1}-x_{j-1}) A_{+j} \cZ_j=
(\Psi \cZ)_j
+\Delta(\Psi^1 \cZ)_j
$$
where $\tilde S$ is the constant-coefficient realization of $\cS$,
$\Delta$ is the forward difference operator $(\Delta f)_j:=f_{j+1}-f_j$, and
$$
\begin{aligned}
(\Psi \cZ)_j)&:= -(x_{j+1}-x_{j-1})
P_j^{-1}\Big(P_{j+1}-P_{j-1} -(x_{j+1}-x_{j-1})(A_jP_j-P_{j+1}A_{+j})\Big)\cZ_j\\
&\quad +
P_j^{-1}\Big(\frac{P_{j+1}+P_{j-1}}{2}-P_j\Big)(x_{j+1}-x_{j-1})A_j \cZ_j\\
&\quad +
\Big(\frac{1}{2}\Big)
\Delta^2\Big(P_j^{-1}(P_{j+1}-P_{j-1}) \Big) \cZ_{j-1},\\
(\Psi^1 \cZ)_j&:=
\Big(P_j^{-1}(P_{j+1}-P_{j-1}) \Big)(\cZ_{j}- \cZ_{j-1}\Big).\\
\end{aligned}
$$
Arguing as in the previous example, we obtain similarly as in
\eqref{lastest} the bound
\be\label{lastestmid}
|(\Psi \cZ)_j|\le 
C |h_j|^2e^{-\tilde \theta x_j} \sup_{j-1\le m\le j}|\cZ_m|.
\ee
and, likewise,
\be\label{ho}
|(\Psi^1 \cZ)_j|\le C |h_j|e^{-\tilde \theta x_j} \sup_{j-1\le m\le j}|\cZ_m|.
\ee
}
\end{example}

\br\label{egstab}
The implicit backward Euler scheme of Example \ref{Eulereg}
is $A$-stable as a Cauchy solver in the backward $x$ direction,
hence satisfies Assumption \ref{stabassums} in the 
one-sided case $\Sigma_+=\CC^n$, $\tilde \mu=0$,
with no assumption on the mesh size.
For example, in the scalar case 
\be\label{scalar}
W'=-aW, \quad a>0,
\ee
$\cW_{j+1}=\cW_{j}+h_ja \cW_j$ yields immediately
$\cW_j= (I+h_ja)^{-1}\cW_{j+1}\le \cW_{j+1}$, for any $h_j>0$.
The implicit Midpoint method viewed as a two-step backward scheme
has characteristic roots $-ah_j\pm \sqrt{(ah_j)^2+1}$ that 
remain strictly $\gtrless 1$ independent of $h_j>0$, but is
not backward $A$-stable as a Cauchy solver.
\er

Generalizing the results of the examples, we make the
following final assumption on the scheme, which appears
to be satisfied in most if not all cases of interest.

\begin{ass}\label{zcon}
Under the change of coordinates $\cW_j=:P_j \cZ_j$,
$P$ defined as in \eqref{factor},
equation \eqref{nsys} becomes
\be\label{cZeq}
\tilde S \cZ= 
\Psi^0 \cZ + \Delta (\Psi^1 \cZ),
\ee
where $\tilde S$ is the same discretization scheme used for $\cW$
applied to the constant-coefficient case $A\equiv A_+$, $\Delta$
is the forward difference operator, and
\be\label{Psiest}
|(\Psi^r \cZ)_j|\le C(\theta,\tilde \theta) 
|h_j|^{1-r}e^{-\tilde \theta x_j} \sup_{j-\ell-1\le m\le j+\ell}
|\cZ_m|.
\ee
\end{ass}

\subsection{Numerical convergence lemma} \label{numconv}

With these preparations, it is straightforward to establish our 
following main result.

\bt\label{convthm}
Assuming \eqref{expdecay3}, \eqref{kercond}, \eqref{lop}, 
and Assumptions \ref{gapassum}, \ref{stabassums}, and \ref{zcon},
for fixed $0<\tilde \theta<\theta$,
for $\sup_j |h_j| $ sufficiently small and $M>0$ sufficiently large,
there exists a unique solution $\cW$ of \eqref{diff}, \eqref{dbc2},
which, moreover, satisfies 
for $C>0$ independent of $M$, $\tau$,
\be\label{convrate}
|\cW_j-\bar W(x_j)|\le 
C(e^{-\tilde \theta M } 
+ \sup_j|\tau_j|)
\quad \hbox{\rm for all } \, j=0, \dots, J,
\ee
where $\bar W$ is the solution of \eqref{nsys}, \eqref{nbc}
guaranteed by Lemma \ref{exist}, 
and $\theta$ is as in \eqref{expdecay3}.
For $\Sigma_+=\CC^n$, 
the mesh condition $\sup_j |h_j|<<1$ can be relaxed to
$\sup_j |h_j| e^{(-\tilde \theta-\tilde \nu) x_j}<<1$,
where $-\tilde \theta < \tilde \nu \le 0$ is less 
than the smallest real part of the eigenvalues of $A_+$. 
\et

That is, we assert existence and convergence so long as the spectral
gap $\min \Re \sigma \big(A_+|_{\Sigma_+} \big)$ associated with 
the boundary projection $\Pi_+$ at $+\infty$
is greater than  $-\theta$, where $\theta$ as in \eqref{expdecay3}
is the exponential rate of convervence of $A$ to $A_+$
as $x\to +\infty$.
On the other hand, standard theory \cite{Be1} requires
a positive spectral gap $\beta$ and concludes convergence at
rate $Ce^{-\tilde \beta M}$ for $0<\tilde \beta<\beta$.
In other words, though it fails the positive gap condition of 
standard theory, the problem remains numerically well-conditioned so
long as ``badness'' of the boundary condition
as measured by negativity of the spectral gap is less than
the rate of exponential convergence, in exact analogy with the
gap lemma of continuous theory \cite{GZ,KS,ZH}.

\begin{proof}
By Assumption \ref{zcon},
$\tilde S \cZ= \sum_{r=0}^2 \Delta^r \Psi^r \cZ$ and
$\tilde S \bar \cZ= \sum_{r=0}^2 \Delta^r \Psi^r \bar \cZ + \cF$,
where $\cW_j=:P_j\cZ_j$, 
$\bar \cW_j=\bar W(x_j)=:P_j\bar \cZ_j$, 
and $h_j\tau_j=:\cF_j$.  Defining the convergence error 
$\cE:=\cZ-\bar \cZ$, we thus obtain the error equation
\be\label{erreq}
\tilde S \cE= \sum_{r=0}^2 \Delta^r \Psi^r \cE - \cF,
\ee
with boundary conditions
\be\label{errbc}
\tilde \Pi_0 \cE_0= 0 \quad  \hbox{\rm and }\quad
\tilde \Pi_+(\cE_J)=
-\tilde \Pi_+(\bar \cZ_J-\tilde V_+) =O(e^{-\tilde \theta M}).
\ee

Denoting by $\check \cE=\cT(\cE)$ the solution of 
$\tilde S \check \cE= \sum_{r=0}^2 \Delta^r \Psi^r \cE - \cF$,
\eqref{errbc}, we thus obtain the fixed-point formulation
\be\label{fixedpt}
\cE= \cT(\cE)
\ee
for the solution of \eqref{erreq}--\eqref{errbc}, and
thus for the solution $\cZ=\bar \cZ+\cE$ of the original problem.

Applying bounds \eqref{dual1}-\eqref{dual2} of Remark \ref{dualrmk},
together with \eqref{Psiest}, \eqref{errbc}, and \eqref{trunc}, and
the principle of linear superposition, we obtain
\ba\label{keycalc}
|\check \cE_j|&\le C \big( \eps_0 e^{\tilde \mu x_j} 
+ 
\sum_{0\le k\le j}  e^{\beta (x_j-x_k)}
\big( |(\Psi^0 \cE)_k| +\sup_{j-1\le m\le j} |h_m| |(\Psi^1 \cE)_k| \big)\\
&\quad +\sum_{0\le k\le j}  e^{\tilde \mu (x_j-x_k)}\tau_k h_k 
\big)\\
&\quad +
C \big( \eps_J e^{\tilde \mu (x_j-x_J)} 
+ 
\sum_{j\le k\le J}  e^{\nu (x_j-x_k)} 
\big( |(\Psi^0 \cE)_k| +\sup_{j-1\le m\le j} |h_m| |(\Psi^1 \cE)_k| \big)\\
&\quad +
\sum_{j\le k\le J}  e^{\tilde \mu (x_j-x_k)}\tau_k h_k \big),
\ea
$\Psi:=(\Psi^0,\Psi^1,\Psi^2)$, where $\beta<0$ is greater than
the largest real part of the eigenvalues of $A_+$ associated
with the invariant subspace $\tilde \Sigma_+$ complementary
to $\Sigma_+$, and $-\tilde \theta < \tilde \nu<\nu \le 0$ is less that
than the smallest real part of the eigenvalues of $A_+$ associated
with $\Sigma_+$; see Assumption \ref{gapassum}.
From \eqref{keycalc}, we readily obtain by a discrete version of 
calculation \eqref{3.12g} in the argument of Lemma \ref{gaplemma} the
a priori estimate
\be\label{convrateE}
\sup_{0\le j\le J}|\cE_j|\le 
C\big(e^{-\tilde \theta M } + \sup_j|\tau_j| +\sup_j|h_j|
\sup_{0\le j\le J}|\cE_j|
\big),
\ee
which for $\sup_j |h_j|$ sufficiently small implies
\be\label{fconvrateE}
\sup_{0\le j\le J}|\cE_j|\le 
C\big(e^{-\tilde \theta M } + \sup_j|\tau_j| \big),
\ee
hence, by boundedness of $|P_j|$ and $\cW_j-\bar W(x_j):=P_j\cE_j$,
the desired bound \eqref{convrate}.
A similar estimate yields contractivity of $\cT$ in the sup-norm, hence
existence and uniqueness in the class $\ell^\infty[0,J]$.

Finally, observing in case $\Sigma_+=\CC^n$ that
only the $\sum_{j\le k\le J}$ terms in \eqref{keycalc} appear,
we may bound
$\sup_{j-1\le m\le j} |h_m| |(\Psi^1 \cE)_k|$ using $j\le k$ by
$$
\sup_{j-1\le m\le j} |h_m| 
\sup_{k-\ell-1\le m\le k+\ell} e^{-\tilde \theta x_m}|h_m|
\le
\sup_{0\le j\le J}\big(e^{(-\tilde \theta-\tilde \nu)x_j}|h_j|\big)
\sup_{k-\ell-1\le m\le k+\ell} e^{\tilde \nu x_m}|h_m|
$$
and similarly for $|(\Psi^0 \cE)_k|$,
to obtain by the same argument
$$
\sup_{0\le j\le J}|\cE_j|\le 
C\big(e^{-\tilde \theta M } + \sup_j|\tau_j|
+\sup_{0\le j\le J}\big(e^{(-\tilde \theta-\tilde \nu)x_j}|h_j|\big)
\sup_{0\le j\le J}|\cE_j|
\big),
$$
in place of \eqref{convrateE}, yielding existence, uniqueness, and
convergence under the relaxed mesh condition
$\sup_j |h_j| e^{(-\tilde \theta-\tilde \nu) x_j}<<1$.
\end{proof}

\br\label{apps}
By Remark \ref{egstab}, Theorem \ref{convthm} applies in
particular to the one-sided, or Dirichlet, 
case $\Sigma_+=\CC^n$, with
the implicit backward Euler method of Example \ref{Eulereg}.
\er

\br\label{largestep}
Since $-\tilde \theta -\tilde \nu<0$ by Assumption \ref{gapassum},
in the case $\Sigma_+=\CC^n$ of our main interest,
the mesh and truncation error conditions
$$
\sup_j |h_j| e^{(-\tilde \theta-\tilde \nu) x_j}<<1
\quad \hbox{\rm and }\quad
\sup_j |\tau_j|\le \sup_j |h_j|^k e^{-\tilde \theta x_j}<<1
$$
required for convergence allow for arbitrarily large step 
size as $x_j\to +\infty$, hence our result indeed applies to
shooting-type schemes with adaptive step size such as 
are used in practice.
\er

\br\label{pfrmk}
The method of proof in the argument of Theorem \ref{numconv},
based on Lemma \ref{conjugation},
is designed to handle the general boundary-value case.
For shooting methods, there is a much simpler
proof on $[L,M]$ with $1<<L\le M$, as in the proof of Lemma \ref{gaplemma},
just separating constant-coefficient and exponentially decaying parts 
of $A$ in the original $\cW$ equation and obtaining contraction
as in the proof of Lemma \ref{gaplemma} from largeness of $L$.
The extension to the full domain $[0,M]$ then follows similarly
as in the continuous case by standard convergence results 
for the Cauchy problem on a bounded domain.
The conjugation argument is used to obtain contraction for
globally defined schemes on the full domain $[0,M]$.
\er


\subsection{Convergence of the centered exterior product method} \label{extstab}

The centered exterior product method as described in
the introduction consists of solving
\be\label{exeq}
\WW'=(\mA(x,\lambda)-\mu_{S_+})\WW,
\ee
from $x=+\infty$ to $x=0$ for
$\WW^+:=W_1^+\wedge\cdots\wedge W_k^+$,
where the linear operator $\mA$ is defined
by the Leibnitz formula
\be\label{Leib}
\mA W_1\wedge\cdots\wedge W_k:=
(A W_1\wedge W_2\wedge \cdots\wedge W_k )+
\dots +
 (W_1\wedge \cdots\wedge AW_k)
\ee
with eigenvectors and eigenvalues $\cR=r_1\wedge \cdots \wedge r_k$,
$\mu=a_1+ \dots + a_k$, where $r_j$ and $a_j$ are eigenvectors of $A$,
and $\mu_{S_+}$ is the sum of the eigenvalues of $A_+$ associated
with the $k$-dimensional stable subspace $S_{+}$, to obtain
a solution asymptotic to an eigenvector $\cR_{S_+}$ of $\mA_+$
obtained as the wedge product of a basis of $S_+$;
solving the symmetric equation from $x=-\infty$ to $x=0$ for a solution
asymptotic at $-\infty$ to an eigenvector $\cR_{S_-}$ of $\mA_-$
associated with the unstable subspace $S_-$ of $A_-$; then
evaluating the Evans function following \eqref{exteval} as
\be\label{deval}
D(\lambda):=\langle (\WW^+\wedge \WW^-)|_{x=0}\rangle,
\ee
$\WW^-:=W_{1}\wedge \cdots \wedge W_{n-k}^-$,
where $\langle \cdot \rangle$ denotes coordinatization 
in the standard Euclidean basis, i.e.
$\eta=:\langle \eta \rangle (e_1\wedge \cdots \wedge e_n)$
for an $n$-form $\eta$.\footnote{See Section \ref{exinit} for
numerical prescriptions of $\cR_{S_\pm}$.}

Equation \eqref{exeq} is of the form \eqref{nsys} considered
in Section \ref{contprob}, so that we may apply our just-developed 
theory. Moreover, as $\mu_{S_+}$ and $\mu_{S_-}$ respectively
are the smallest real part and largest real part eigenvalues
of $\mA_+$ and $\mA_-$, we are in the more favorable 
one-sided, or ``Dirichlet'' case $\Sigma_\pm=\CC^{N_\pm}$ 
suitable for shooting methods, where $N_+:={n\choose k}$ and
$N_-:={n\choose n-k}$ are the dimensions of the systems for
$\WW^+$ and $\WW^-$.
We may thus approximate the solution as described in Section
\ref{discretized} by solving a pair of finite difference schemes
\eqref{diff}, on $[0,M]$ and $[0,-M]$, respectively,
discretized as $0=x_0<x_1<\cdots<x_J=M$ and
$0=x_0>x_{-1}> \cdots> x_{-J}=-M$,
with Dirichlet boundary conditions
\be\label{exbc}
\cW_J^+=\cR_{S_+}, \quad
\cW_{-J}^-=\cR_{S_-},
\ee
determining thereby a numerically approximated Evans function
\be\label{nevans}
\cD^{M,h}(\lambda):=
\langle (\cW_0^+\wedge \cW_0^-)\rangle,
\ee
where $h:=(h_{-J},\dots, h_J)$ is the vector of mesh blocks
used to discretize $[-M,M]$.

In the above discussion, we have implicitly assumed the 
{\it consistent splitting hypothesis} of \cite{AGJ}: 
that the dimensions of the stable subspace of $A_+(\lambda)$
and the unstable subspace of $A_-(\lambda)$ 
sum to full rank $n$ on the subset of
$\lambda$ under consideration.
By standard considerations \cite{He,GZ,Z1},
the ``region of consistent splitting'' on which this holds
typically includes the component of real $+\infty$ in the complement of the
essential spectrum of the associated linearized differential operator $L$
of \eqref{eval} whose point spectra the Evans function is designed to determine.
However, as pointed out in \cite{GZ,ZH}, it is sometimes useful to 
extend this region of investigation and study also eigenvalues embedded
in the essential spectrum.

Following \cite{GZ,ZH}, we thus consider the problem
in a slightly more general setting, substituting in place
of consistent splitting the following {\it gap assumption}.

\begin{ass}\label{exgap}
\textup{
On $\Lambda \subset \CC$, 
the spaces $S_+$ and $S_-$ are invariant subspaces
of $A_+$ and $A_-$, analytic in $\lambda$, with
dimensions $k$ and $(n-k)$.
Moreover, the spectral gaps $\nu_+$ and $\nu_-$ defined as the
maximum of the difference between the smallest real part of the eigenvalues
of $A_+$ not associated with $S_+$ and the largest real part of those
associated with $S_+$ and
the maximum of the difference between the 
smallest real part of the eigenvalues of $A_-$ associated with $S_-$ and
the largest real part of those not associated with $S_+$, satisfy
\be\label{exgapeq}
\nu_j > -\theta, \quad j=\pm,
\ee
where $\theta>0$ as in \eqref{expdecay3} is the exponential rate of convergence
of $A$ to $A_\pm$ as $x\to \pm \infty$.
}
\end{ass}

Applying Theorem \ref{convthm} in this context, we 
obtain the follow convergence result.

\bc\label{exconv}
Under Assumptions \ref{stabassums}, \ref{zcon}, and \ref{exgap},
for fixed $0<\tilde \theta<\theta$,
for $M>0$ sufficiently large and
$\sup_j |h_j| e^{(-\tilde \theta-\tilde \nu) x_j}$ sufficiently small,
where $-\tilde \theta < \tilde \nu \le 0$ is less than $\min \nu_\pm$, 
there exist unique solutions $\cW^\pm$ of 
\eqref{nsys}, \eqref{exbc} determining an approximate 
Evans function $\cD^{M,h}(\lambda)$ satisfying
\be\label{exconvrate}
|\cD^{M,h}-D|\le 
C(e^{-\tilde \theta M } 
+ \sup_j|\tau_j|)
\ee
uniformly on compact subsets of $\Lambda$,
where $D$ is constructed following \eqref{deval}
from the solutions $\WW^\pm$ of \eqref{exeq} 
guaranteed by Lemma \ref{exist}, 
$\theta$ is as in \eqref{expdecay3}, and $\tau_j$ is truncation error.
\ec

\begin{proof}
Immediate, observing that Assumption \ref{exgap} implies
Assumption \ref{gapassum}.
\end{proof}

\subsubsection{Mesh requirements, and 
computations in the essential spectrum} \label{meshreq}

So long as we remain in the region of consistent splitting (see
discussion above Assumption \ref{exgap}), i.e., away from the
essential spectrum of the operator $L$ whose point spectra we
seek to study, we have $\Re \sigma \mA \ge 0$, with a simple
eigenvalue at zero, from which we may obtain the stability
property \eqref{stabassumption2} of (ii) needed for the
convergence proof, with value $\tilde \mu=0$, even though
the statement of (ii) is not strictly satisfied.
Indeed, this situation holds for general difference schemes
in case $\Sigma_+=\CC^n$ whenever $\Re \sigma A_+\ge 0$
and zero is a semisimple eigenvalue of $A_+$, yielding the
same convergence results stated in Theorem \ref{numconv} and
Corollary \ref{exconv}.
Moreover, the spectral gaps $\nu_\pm$ of Assumption \ref{exgap}
are identically zero, and $\tilde \nu$ (by semisimplicity)
may be taken as zero as well.

Together with Remark \ref{validity}, this shows that, for shooting
methods based on an $A$-stable Cauchy solver (e.g., RK45),
there is no requirement on the mesh size $|h_j|$ beyond
the requirement
$\sup_j |h_j| e^{(-\tilde \theta-\tilde \nu) x_j}
\le \sup_j |h_j| e^{-\tilde \theta x_j} $ sufficiently small
stated explicitly in the Theorem (Corollary).
This agrees with observations in \cite{Br,BrZ,HuZ1,BHRZ,HLZ,CHNZ}
in which centered exterior-product computations away from the 
essential spectrum are observed to require an extremely sparse mesh.

On the other hand, for $\lambda$ inside the essential spectrum,
one or more of the gaps $\nu_\pm$ becomes negative and
$\Re \sigma \mA_\pm$ are no longer of one sign.
As suggested by the simple computations of Remark \ref{egstab},
this might result in a much stricter requirement on $|h_j|$
in order to satisfy condition (ii) (with $\tilde \mu$ now strictly
positive).
Whether this is a real effect or just an artifact of our 
analysis is not clear, but this would be an interesting issue
for further investigation.\footnote{In any case,
this would presumably be confined to shooting methods.}
A second interesting question, for general centered schemes,
would be the relation between the dimension of the kernel of $A_\pm$ 
and the size of the coefficient $C$ in convergence estimate
\eqref{convrate}; we conjecture that they are roughly proportional,
information that could be useful in comparing schemes.

\section{Stability of continuous orthogonalization} \label{orthstab}
We next address stability of the continuous orthogonalization method.
Consider a solution $\bar \Omega$ of the continuous orthogonalization 
system \eqref{Omegaeq}(i) restricted to
the Stiefel manifold $\cS=\{\Omega:\, \Omega^*\Omega=I_k\}$,
such that $\bar \Omega \to \Omega_+$ as $x\to +\infty$.
Evidently, the columns of $\Omega_+$ are an orthonormal basis for
an invariant subspace $\Sigma_+$ of $A_+=\lim_{x\to +\infty}A(x)$.
Of particular interest is the case arising in Evans function computations
that $\Sigma_+$ is the stable or (at certain boundary points)
a neutrally-stable subspace of $A_+$.
Linearizing \eqref{Omegaeq}(i) about $\bar \Omega$,
we obtain the linearized system
$$
\Omega'= (I-\bar \Omega \bar \Omega^*)A\Omega
-\Omega \bar \Omega^* A\bar \Omega
-\bar \Omega \Omega^* A\bar \Omega,
$$
for which the limiting constant-coefficient equation as $x\to +\infty$ is
\be\label{limorth}
\Omega'= \cL \Omega:= (I-\Omega_+ \Omega^*_+)A\Omega
-\Omega \Omega_+^* A\Omega_+
-\Omega_+ \Omega^* A\Omega_+.
\ee
We wish to assess the backward stability of \eqref{limorth}
with respect to {\it general} perturbations,
both along the tangent manifold of the Stiefel manifold $\cS$
and in transverse directions.

\br\label{adjrmk}
 Properly speaking, \eqref{limorth} is linear in
the pair of variables $(\Omega, \Omega^*)$ and not
$\Omega$ alone, since matrix adjoint is not a linear operation.
Along the tangent manifold, however, it is linear as we
shall see in $\Omega$ alone.
\er

\subsection{Asymptotic stability in tangential directions}\label{tan}
The tangent manifold to $\cS$ at $\Omega_+$ consists of
directions $\Omega$ such that $D(\Omega^*\Omega)_{\Omega_+} \Omega=0$,
or
\be\label{skewcond}
\Omega_+^*\Omega + \Omega^*\Omega_+=0,
\ee
that is, for which $\Omega^*_+\Omega$ is skew-symmetric.
As the Stiefel manifold is invariant under \eqref{Omegaeq}(i),
this is evidently invariant under \eqref{limorth}, as direct
computation readily verifies.
It is spanned by the direct sum of
eigenvectors $\Omega_+ K$, $K$ skew, in the kernel
of $\cL$ and eigenvectors 
\be\label{kereig}
\Omega_{jk}:=(I-\Omega_+\Omega_+^*)r_j \sigma_k^*
\ee
in the kernel of $\Omega^*_+$,
where $r_j$ run through the eigenvectors of $A$ transverse to $\Sigma_+$
and $\sigma_k \in \CC^k$ are left eigenvectors of $\alpha$ defined
by $\alpha \Omega_+:= A_+\Omega_+$.
The former correspond to rotations within the same invariant subspace,
the latter to perturbations outside $\Sigma_+$.

Under \eqref{skewcond}, we readily find that \eqref{limorth} simplifies to
\be\label{limorthS}
\Omega'= \cL \Omega:= (I-\Omega_+ \Omega^*_+)
(A\Omega -\Omega \alpha),
\ee
which, for eigenvectors \eqref{kereig} yields
$$
\cL \Omega_{jk}=\Omega_{jk} (a_j-a_k),
$$
where $a_j$ and $a_k$ are the eigenvalue associated with $r_j$
and $\sigma_k$.
Thus, the eigenvalues along the Stiefel manifold are
zero ($k(k+1)/2$-fold) and $a_j-a_k$ ($(n-k)\times k$ in total),
where $a_k$ belong to $\Sigma_+$ and $a_j$ to the compementary
invariant subspace of $A_+$.

In particular, when $\Sigma_+$ is the stable subspace of $A_+$,
the Stiefel eigenvalues of $\cL$ have either zero or positive real part,
and so \eqref{limorth} restricted to the tangent space, as claimed
in the introduction, {\it is (neutrally) stable in backward $x$,}
at least for the limiting system as $x\to +\infty$.

\br\label{linrmk}
Reduced equation \eqref{limorthS} is linear in $\Omega$ alone,
since it does not involve $\Omega^*$.
\er

\subsection{Asymptotic stability in transverse directions}\label{trans}

Off the Stiefel manifold, we face the difficulty pointed out
in Remark \ref{adjrmk} that $\cL$ strictly speaking is a linear
function of $\Omega$ and $\Omega^*$,
so that \eqref{limorth} actually represents a pair of coupled equations,
complicating calculations.
However, we can sidestep much of this difficulty by noting that the
remaining modes not already treated are of form $\Omega_+ \beta$,
where $\beta\in \CC^{k\times k}$ by a brief calculation satisfies
\be\label{betaeq}
\beta'= -(\beta+ \beta^*)\alpha,
\ee
$A_+\Omega_+=:\Omega_+\alpha.$
Introducing $\cR:=\Re \beta:=(1/2)(\beta+\beta^*)$, we thus have
the linear system
\ba\label{trisys}
\cR'&= -\cR \alpha - \alpha^*\cR,\\
\beta'&= -2 \cR \alpha,
\ea
which has block-triangular form 
\be
\bp \cR \\ \beta\ep'
=
\bp 
\cM & 0\\
\cN & 0\\
\ep
\bp \cR \\ \beta\ep,
\ee
where $\cM \cR:= -\cR \alpha - \alpha^*\cR$ and
$\cN \cR:= - 2\cR\alpha$.

This has the $k^2$-dimensional kernel $\cR=0$ already
identified in Section \ref{tan}, and $k^2$ eigenvectors
\be\label{remaining}
\bp \cR_{jk} \\ \cN \cR_{jk}/\mu_{jk}
\ep,
\qquad
\cR_{jk}:= l_jl_k^*,
\ee
with eigenvalues $\mu_{jk}:=-(a_j^* + a_k)$, where
$l_j$ are left eigenvalues of $\alpha$ and $a_j$ the associated
eigenvalues, which are exactly the eigenvalues of $A_+$ restricted
to $\Sigma_+$.

In particular, when $\Sigma_+$ is the stable subspace of $A_+$,
the transverse eigenvalues of $\cL$ have either zero or positive real part,
and so the Stiefel manifold as indicated in the introduction, 
{\it is (neutrally) stable in backward $x$,}
at least for the limiting system as $x\to +\infty$.

\br Alternatively, we may follow the simpler argument of the
introduction to reach the same conclusion without finding explicitly
the eigenmodes of the system.
\er

\subsection{Convergence of the polar coordinate method}\label{polconv}


Consider now the polar coordinate method, consisting of approximation
of \eqref{Omegaeq} on $[-M,0]$ and $[0,M]$ by forward (resp. backward)
Cauchy solvers,
under Dirichlet boundary (i.e., Cauchy) conditions
\be\label{pbc}
\cW^+_J=(\Omega_+,\log \tilde \gamma_+), \quad
\cW^-_J=(\Omega_+,\log \tilde \gamma_+),
\ee
where $\cW^\pm_j$ are the numerical approximations of 
$(\Omega^\pm,\log \tilde \gamma_\pm)(x_j)$.

\bc\label{polarconv}
Under Assumptions \ref{stabassums}, \ref{zcon}, and \ref{exgap},
for fixed $0<\tilde \theta<\theta$,
for $M>0$ sufficiently large and
$\sup_j |h_j| e^{(-\tilde \theta-\tilde \nu) x_j}$ sufficiently small,
where $-\tilde \theta < \tilde \nu \le 0$ is less than $\min \nu_\pm$, 
there exist unique solutions $\cW^\pm$ of 
the discretization of \eqref{Omegaeq} with boundary conditions \eqref{pbc} 
determining an approximate Evans function $\cD^{M,h}(\lambda)$ satisfying
\be\label{polarconvrate}
|\cD^{M,h}-D|\le 
C(e^{-\tilde \theta M } 
+ \sup_j|\tau_j|)
\ee
uniformly on compact subsets of $\Lambda$,
where $D$ is constructed following \eqref{deval}
from the solutions $\WW^\pm$ of \eqref{exeq} 
guaranteed by Lemma \ref{exist}, 
$\theta$ is as in \eqref{expdecay3}, and $\tau_j$ is truncation error.
\ec

\begin{proof}
%
%
Equivalently, we must establish the analog of Theorem \ref{numconv}.
This follows in routine fashion by
decomposing the error equation for the numerical difference scheme into
its linear and nonlinear parts and treating the nonlinear part along
with the conjugation error from transformation into $Z$-coordinates
together as source terms in a fixed-point equation in a combination of
the discrete linear argument of Corollary \ref{exconv} and the
continuous argument of Lemma \ref{gaplemma}, to obtain a contraction
in the weighted $\ell^\infty(\ZZ^+) $ space defined by norm 
$\|\cW\|:=\sup_j |\cW_je^{\tilde \theta x_j}|$, similarly as in
the standard proof of the Stable Manifold Theorem.

We omit the straightforward but tedious details, except to mention one
subtle point that will recur in later applications.
Namely, the righthand side of \eqref{Omegaeq}(i), considered as a function
on the complex-valued matrix $\Omega$, {\it is not $C^1$}.
For, it involves the operation of matrix adjoint, which in turn involves
complex conjugation, a non-analytic function on complex arguments.
Thus, we cannot immediately apply the above-described argument to
\eqref{Omegaeq} as written as a complex-valued ODE, but must instead
first decompose it into real and imaginary parts, or, as we prefer
to do, consider the doubled system
\ba\label{doubled}
\Omega'&= (I-\Omega \tilde \Omega)A\Omega,\\
\tilde \Omega'&= \tilde \Omega A^*(I-\Omega \tilde \Omega)
\ea
in the pair of variables $(\Omega,\tilde \Omega)$, with
$\tilde \Omega:=\Omega^*$.
With this (purely internal) change, the argument goes through
as described to yield the claimed result.\footnote{
Note that applying a standard numerical difference scheme to
the doubled system \eqref{doubled} yields an algorithm
identical to what would be obtained by applying it to
the original equation \eqref{Omegaeq}(i).}
\end{proof}

\section{Boundary-value algorithms}\label{collocation}

Finally, on a more speculative note, we develop further
some ideas of \cite{S,HuZ1} regarding implementation of
boundary-valued based Evans solvers for use in 
extremely large-scale systems, in the light of our new results.

\subsection{Sandstede's method}\label{sand}
We first describe (perhaps an imperfect translation of)
Sandstede's original idea based on established
projective boundary-value methods \cite{Be1}.
This consists, loosely speaking, of numerically solving the original, 
{\it uncentered} Evans system \eqref{gfirstorder} on $[0,M]$
with mixed projective boundary conditions
\be\label{mixed}
\Pi_+ \cW_J^m=0, \quad \Pi_0 \cW_0=\alpha_m,
\quad m=1, \dots, k,
\ee
where $\Pi_+$ is the rank-$(n-k)$ unstable eigenprojection of $A_+$,
$\Pi_0$ is a randomly chosen rank-$k$ projection, and $\alpha_m$,
$m=1, \dots, k$ are $k$ random {\it phase conditions}
determining a basis of $k$ independent solutions $\cW^m$.
Here, as usual, $\cW^m_j$ denotes the numerical approximation
of $W^m(x_j)$.
For generic choices of $\Pi_0$, $\alpha$, this will give a numerically
well-conditioned problem, so the procedure is to randomly select candidate
values, then change these if the method does not converge after appropriate
time.

The solution of the boundary-value scheme for a given parameter value is then
obtained by Newton iteration combined with continuation/path-following. 
Once the method is running, initial guesses for $\Pi_0$, $\alpha$, and
the solution itself may be chosen
strategically based on the solution for nearby parameters, to
improve conditioning/speed of convergence.

The disadvantages of this method are two:
(i) decay at plus infinity
means we don't have control of the asymptotic behavior of
solutions at $+\infty$, making it difficult to impose the desirable
property of analyticity in $\lambda$; indeed, we prescribe solutions by phase
conditions at $x=0$, where we have no direct knowledge of the link
to asymptotic behavior.
(ii) (related) prescription of random phase conditions at the origin 
is logistically complicated, requiring additional error control/programming
beyond just solution of the Evans ODE.

Advantages of the method are the existence of a well-developed 
theory of convergence/error estimation for methods of this form, 
and a hoped-for dimensional advantage of
iterative methods vs. shooting in the treatment of large systems.

%
%

\subsection{A polar coordinate-based method}\label{cont}

Following a suggestion of \cite{HuZ1}, we propose an alternative
boundary-value scheme based on the polar coordinate method, 
using a Newton-based 
iterative boundary-value solver\footnote{For example, MATLAB's BVP5P.}
to approximate the solutions $(\Omega^+, \tilde \Omega^+,\tilde \gamma^+)$
and $(\Omega^-, \tilde \Omega^-,\tilde \gamma^-)$ of the doubled equations
$$
\Omega'= (I-\Omega \tilde \Omega)A_c\Omega,
\qquad
\tilde \Omega'= \tilde \Omega A_c^*(I-\Omega \tilde \Omega),
\qquad
\log \tilde \gamma'= \Trace (\tilde \Omega A_c\tilde \Omega)
-\Trace (\tilde \Omega A_c\tilde \Omega)_\pm
$$
on $[0,\pm M]$ and $[-M,0]$ with Dirichlet boundary conditions
\ba\label{pbc2}
(\Omega, \tilde \Omega, \tilde \gamma)(\pm M)= (\Omega_\pm, \Omega_\pm^*, \tilde \gamma_\pm),
\ea
starting with the exact solution
$(\Omega, \tilde \Omega, \tilde \gamma)
\equiv (\Omega_\pm, \tilde \Omega_\pm, \tilde \gamma_\pm)$
at $c=0$ and continuing via the homotopy
$$
A_c:= A_\pm + c(A-A_\pm), \quad c\in [0,1]
$$
to the desired solution of the full problem $A_c=A$ at $c=1$.

This approach eliminates disadvantages (i)-(ii) of the standard approach and 
appears straightforward to code.
Moreover, Corollary \ref{polarconv} 
gives a rigorous convergence result, indicating at 
least theoretical feasibility. 
What remains to be seen
is whether it is practically useful on the scale of interest, and
how its performance compares with standard uncentered schemes as
described in Section \ref{sand}.
We hope to address these questions in future work.

\br\label{C1}
As noted already in the proof of Corollary \ref{polarconv},
the use of doubled coordinates is necessary in order that the
ODE be $C^1$ as a function of its arguments, since the 
matrix adjoint operation, since not analytic as a complex-valued
function, is not $C^1$.
First-order smoothness is needed to apply Newton iteration, of which
the first step is linearization.
\er

\subsection{A conjugation-based method}\label{conj}
A possible drawback of the polar coordinate in numerically
sensitive situations is its nonlinearity.
An alternative, still more speculative, {linear} solution
would be to use a Newton-based iterative solver to
approximate on $[0,M]$ and $[-M,0]$ solutions $P^+$
and $P^-$ of the conjugation equations
$$
P'=\cA P:= A_\pm P - P A
$$
of \eqref{matrixODE}, using projective boundary conditions
$$
\Pi_\pm (P_\pm-I)(\pm M)=0, \quad \Pi_0 P_\pm=\alpha
$$
starting with initial guess $P^\pm \equiv I$ and using
a similar homotopy 
$$
\cA_c:= \cA_\pm + c(\cA-\cA_\pm), \quad c\in [0,1]
$$
from $\cA$ to its constant-coefficient limits $\cA_\pm$,
defining an Evans approximant simply as
\be\label{approx2}
\cD^{h,M}(\lambda):= \det (P^+R^+, P^-R^-)|_{x=0},
\ee
where $R^\pm$ are matrices whose columns are
bases of the stable (unstable) subspace of
$A_\pm$.\footnote{Described further in Section \ref{init}.}

Again, we have a rigorous convergence result, this time in
the form of Theorem \ref{numconv}, but it is not clear whether
the scheme is practically useful, or if so how its performance
compares to that of the previously mentioned schemes.
Moreover, besides sharing difficulty (ii) of the standard method,
it has the additional difficulty that the dimension of $\Pi_+$
may change with different $\lambda$, necessitating still further
modifications to the boundary conditions. 

\section{Postscript: initialization of eigenbases at infinity}\label{init}
For completeness, we describe, following \cite{BrZ,HSZ,HuZ1,Z2}, a
simple and effective method for computing analytically-chosen initializing
eigenbases at plus and minus spatial infinity.
Combined with the integration methods described in the rest of the paper,
this gives a basic working Evans solver that performs quite 
well in practice.\footnote{
Optimized versions may be found in the STABLAB package developed by
J. Humpherys.}

\subsection{Kato's ODE}
Denote by $\Pi_+$ and $\Pi_-$ the eigenprojections of
$A_+$ onto its stable subspace and $A_-$ onto its unstable
subspace, with $A_\pm$ defined as in \eqref{firstorder}.
Assume as in the introduction 
that the dimensions of the stable and unstable subspaces
are constants $k$ and $n-k$ 
on the desired region of investigation $ \lambda \in \Lambda$
and sum to $n$ (the ``consistent splitting condition'' of \cite{AGJ}).
By standard matrix perturbation theory, 
$\Pi_\pm$ are analytic in $\lambda$ for $\lambda \in \Lambda$ \cite{K}.
Introduce the complex ODE
\be\label{Kato}
R'=\Pi'R, \quad R(\lambda_*)=R_*,
\ee
where $'$ denotes $d/d\lambda$, $\lambda_*\in \Lambda$ is fixed,
$\Pi=\Pi_\pm$, 
and $R=R_\pm$ with $R_+$ and $R_-$ $n\times k$ and $n\times(n-k)$
complex matrices, and $R_*$ is full rank and satisfies 
$\Pi(\lambda_*) R_*=R_*$: that is,
its columns are a basis for the stable (resp. unstable) 
subspace of $A_+$ (resp. $A_-$).

\bl[\cite{K,Z2}]\label{katolem}
There exists a global analytic solution
$R$ of \eqref{Kato} on $\Lambda$
such that (i) $\rank R \equiv \rank R^*$,
(ii) $\Pi R\equiv R$, and (iii) $\Pi R'\equiv 0$.
\el

\begin{proof}
As a linear ODE with analytic coefficients, \eqref{Kato}
possesses an analytic solution in a neighborhood of $\lambda_*$,
that may be extended globally along any curve, whence, by
the principle of analytic continuation, it possesses a global
analytic solution on any simply connected domain containing $\lambda_*$
\cite{K}.
Property (i) follows likewise
by the fact that $R$ satisfies a linear ODE.
Differentiating the identity $\Pi^2=\Pi$ following \cite{K} 
yields $\Pi\Pi'+\Pi'\Pi=\Pi'$, whence, multiplying on the right by $\Pi$,
we find the key property
\begin{equation}\label{Pprop}
\Pi\Pi'\Pi=0.
\end{equation}
From \eqref{Pprop}, we obtain
$$
\begin{aligned}
(\Pi R-R)'&=(\Pi'R+\Pi R'-R')
= \Pi'R+(\Pi-I) \Pi'R
= \Pi \Pi'R,\\
\end{aligned}
$$
which, by $\Pi\Pi'\Pi=0$ and $\Pi^2=\Pi$ gives
$$
\begin{aligned}
(\Pi R-R)'&= -\Pi\Pi'(\Pi R-R), \quad (\Pi R-R)(\lambda_*)=0,
\end{aligned}
$$
from which (ii) follows by uniqueness of solutions
of linear ODE.
Expanding $\Pi R'=\Pi\Pi'R$ and using
$\Pi R=R$ and $\Pi\Pi'\Pi=0$, we obtain $\Pi R'=\Pi\Pi'\Pi R=0$, verifying (iii).
\end{proof}

\br
Property (iii) indicates that the Kato basis is an
optimal choice in the sense that it involves minimal variation
in $R$.
It is also useful as a direct characterization of 
the Kato basis independent of \eqref{Kato}; see \cite{HSZ,BDG}
or Example \ref{sqrteg} below.
\er

\subsection{Numerical implementation}

Choose a set of mesh points $\lambda_j$, $j=0,\dots, J$ 
along a path $\Gamma \subset \Lambda$ and 
denote by $\Pi_j:=\Pi(\lambda_j)$
and $R_j$ the approximation of $R(\lambda_j)$.
Typically, $\lambda_0=\lambda_J$, i.e., $\Gamma$
is a closed contour.

\subsubsection{Computing $\Pi_j$}

Given a matrix $A $, one may efficiently
($\sim 32n^3$ operations; see \cite{GvL,SB}) 
compute by ``ordered'' Schur decomposition\footnote{Supported, for
example, in MATLAB and LAPACK.}, i.e., Schur decomposition
$A=QUQ^{-1}$, $Q$ orthogonal and $U$ upper triangular, for which also
the diagonal entries of $U$ are ordered in increasing real part,
an orthonormal basis 
$$
\check R_u:= (Q_{k+1}, \dots, Q_n),
$$
of its unstable subspace, where $Q_{k+1}, \dots, Q_n$ are the last
$n-k$ columns of $Q$, $n-k$ the dimension of the unstable subspace.
Performing the same procedure for $-A$, $A^*$, and $-A^*$ 
we obtain orthonormal bases $\check R_s$, $\check L_u$, $\check L_s$ 
also for the stable subspace of $A$
and the unstable and stable subspaces of $A^*$, from which
we may compute the stable and unstable eigenprojections 
in straightforward and numerically well-conditioned manner via 
\be\label{Picomp}
\Pi_s:= \check R_s ( \check L_s^* \check R_s )^{-1} \check L_s^*,
\quad
\Pi_u:= \check R_u ( \check L_u^* \check R_u )^{-1} \check L_u^*.
\ee
Applying this to matrices $A_j^\pm:=A_\pm(\lambda_j)$,
we obtain the projectors $\Pi_j^\pm:=\Pi_\pm(\lambda_j)$.
Hereafter, we consider $\Pi_j^\pm$ as known quantities.

\br
It is tempting to instead simply call an eigenvalue--eigenvector 
solver and express $\Pi_s=\sum \frac{r_j l_j^*}{l^*_jr_j}$, 
where $r_j$, $l_j$ are left and right
eigenvalues associated with stable eigenvectors.
However, this becomes ill-conditioned near points where
stable eigenvalues collide, as frequently happens for
cases resulting from other than simple scalar equations.
\er

\subsubsection{First-order integration scheme}
Approximating $\Pi'(\lambda_j)$ to first order by the
finite difference $(\Pi_{j+1}-\Pi_j)/(\lambda_{j+1}-\lambda_j)$
and substituting this into a first-order Euler scheme gives
$$
R_{j+1}=R_j + (\lambda_{j+1}-\lambda_j) 
\frac{\Pi_{j+1}-\Pi_j}{\lambda_{j+1}-\lambda_j}
R_j,
$$
or
$R_{j+1}=R_j + \Pi_{j+1}R_j - \Pi_jR_j$, 
yielding by the property $\Pi_jR_j=R_j$ (preserved exactly
by the scheme) the simple greedy algorithm
\be\label{greedy}
R_{j+1}=\Pi_{j+1}R_j.
\ee
It is a remarkable fact \cite{Z2} (a consequence of Lemma \ref{katolem})
that, up to numerical error, evolution of \eqref{greedy}
about a closed loop
$\lambda_0=\lambda_J$ yields the original value $R_J=R_0$.

\subsubsection{Second-order scheme}

To obtain a second-order discretization of \eqref{Kato},
we approximate
$ R_{j+1}-R_j \approx \Delta \lambda_j \Pi'_{j+1/2}R_{j+1/2}$,
good to second order, where $\Delta \lambda_j:=\lambda_{j+1}-\lambda_j$.  
Noting that $R_{j+1/2}\approx \Pi_{j+1/2}R_j$ to second order, by
\eqref{greedy}, and approximating
$ \Pi_{j+1/2}\approx \frac{1}{2}(\Pi_{j+1}+\Pi_j)$, also good to second order,
and  
$\Pi'_{j+1/2}\approx (\Pi_{j+1}-\Pi_j)/\Delta \lambda_j$,
we obtain, combining and rearranging,
$$
R_{j+1}=R_j + \frac{1}{2}(\Pi_{j+1}-\Pi_j)(\Pi_{j+1}+ \Pi_j)R_j.
$$
Stabilizing by following with a projection $\Pi_{j+1}$, we obtain
after some rearrangement the reduced second-order explicit scheme
\begin{equation}\label{greedy2}
R_{j+1}= \Pi_{j+1}[ I + \frac{1}{2}\Pi_j (I-\Pi_{j+1})] R_j.
\end{equation}
This is the version we recommend for serious computations.
For individual numerical experiments the simpler greedy 
algorithm \eqref{greedy} will often suffice (see discussion,
\cite{Z2}).

\br
Arbitrarily higher-order schemes may be obtained 
by Richardson extrapolation starting from scheme 
\eqref{greedy} or \eqref{greedy2}; see \cite{Z2}.
In practice, this does not seem useful.
\er

\subsection{Initialization of Evans function ODE}\label{evinit}
Finally, we describe the conversion of
analytic bases $R_\pm(\lambda)$ into initial data for the centered
exterior product or polar coordinate method.

\subsubsection{Centered exterior product scheme}\label{exinit}
Denote 
$R^+=(R_1^+,\dots, R_k^+)$
and $ R^-=(R_1^-,\dots, R_{n-k}^-)$.
Then, the initializing wedge products
$\cR_{S_\pm}$ of Section \ref{extstab} are given simply by
\be\label{exinitform}
\cR_{S_+}:=R_1^+\wedge \cdots\wedge R_k^+
\: \hbox{\rm and } \:
\cR_{S_-}:=R_1^-\wedge \cdots\wedge R_{n-k}^-.
\ee

\subsubsection{Polar coordinate scheme}\label{polarinit}

For each $\lambda\in \Lambda$,
we may efficiently compute matrices $\Omega_\pm(\lambda)$ 
whose columns form orthonormal bases for $S_\pm$, 
by the same ordered Schur decomposition used in the computation
of $\Pi_\pm$.
This need not even be continuous with respect to $\lambda$.  
Equating
\[
\Omega_+ \tilde\alpha_+(\lambda)= R_+(\lambda),
\quad
\Omega_- \tilde\alpha_-(\lambda)= R_-(\lambda),
\]
for some $\tilde \alpha_\pm$, we obtain
\[
\tilde\alpha_+(\lambda)= \Omega^*_+ R_+(\lambda),
\quad
\tilde\alpha_-(\lambda)= \Omega^*_- R_-(\lambda),
\]
and therefore the exterior product of the columns of $R_\pm$ 
is equal to the exterior product of the columns of $\Omega_\pm$ times 
\begin{equation}
\label{gamma-}
\tilde\gamma_\pm(\lambda):= \det(\Omega^*R)_\pm(\lambda)).
\end{equation}
Thus, we may initialize the polar coordinate ODE \eqref{Omegaeq} with
\be\label{polariniteq}
\Omega=\Omega_\pm, \quad \tilde \gamma= \det(\Omega^*R)_\pm.
\ee

\br
The wedge products represented by polar coordinates 
$(\tilde\gamma, \Omega)_\pm( \lambda)$, with 
$\tilde \gamma_\pm$ defined as in \eqref{gamma-}, are the same products
$\cR_{S_\pm}$ defined in \eqref{exinitform}. 
In particular, they are analytic with respect to $\lambda$,
though coordinates $\tilde \gamma$ and $\Omega$ in general are not.
\er

\subsection{Error control}\label{katoerror}

As the integration of Kato's ODE is carried out on a bounded closed
curve, standard error estimates apply and convergence is essentially
automatic, and we shall not discuss it. 
In applications, we are often interested in determining the winding
number of $D$ about such a curve.
For this purpose, following \cite{Br,BrZ}, 
we introduce a simple a posteriori ``Rouch\'e'' check
limiting the step-size in $\lambda$ by the requirement that
the relative change in $D(\lambda)$ be less than a conservative $0.1$.
(By Rouch\'e's Theorem, relative error less than one is sufficient
to obtain the correct winding number.)
In contrast to integration in $x$ of the Evans system, integration
in $\lambda$ of the Kato system is a one-time cost, 
so not a rate-determining factor in the performance of the overall code.
However, the computation time {\it is} sensitive (proportional) to the
number of mesh points in $\lambda$, so this should be held down as much
as possible.

\subsection{Finer points: two exceptional cases}\label{exceptional}

We conclude by pointing out two commonly occurring cases
that can give trouble if not expected, and describe some 
practical resolutions.

\subsubsection{Behavior near the origin}\label{origin}

For the linearized operators $L$ arising in the study of stability
of traveling-waves of certain systems such
as viscous conservation laws or Cahn--Hilliard and 
nonlinear Schr\"odinger equations, the point $\lambda=0$ is
embedded in the essential spectrum of $L$.
In computing a winding number around some bounded portion of the
set $\{\Re \lambda \ge 0\}$ of possible unstable eigenvalues,
we must pass through or near this value, at which the spectral gap
(see discussion above Assumption \ref{exgap}) between stable and
unstable subspaces of $A_\pm$ goes to zero.
In this case, the eigenprojections $\Pi_\pm$ lose their characterization
as stable (resp. unstable) eigenprojections of $A_\pm$, so must be
computed in a different way than the ordered Schur decomposition
described above.
Worse, they may lose analyticity, possessing a branch singularity,
at $\lambda=0$.

To avoid the former problem, we typically just compute near but not 
at $\lambda=0$.
However, this leads to occasional uncertainty/bad results near the origin
and should probably be improved in the analytic case 
by instead computing $\Pi_\pm$
at points within or on the essential spectrum boundary of $L$
by analytic extrapolation from values at points outside.
This is an important practical area for further algorithm 
development.\footnote{This would also allow computations within
the essential spectrum, up to now not systematically carried out
(though see \cite{Br} for some preliminary results in this direction).}
The latter problem, concerning behavior near a branch singularity,
is discussed in the next subsection.


\subsubsection{Behavior near a branch singularity}


For certain problems, especially those involving additional parameters,
e.g. multi-d \cite{HLyZ2} or families
of one-dimensional waves that pass through characteristic values
\cite{BHZ}, there may appear for certain parameters
branch points in the eigenvalues of
$A_\pm$ as a function of $\lambda$, 
at which $\Pi_\pm$ therefore blow up \cite{K}.
This requires some adustment in order to restore good behavior.

\begin{example}\label{sqrteg}
A model for this situation is the eigenvalue equation for a
scalar convected heat equation
$\lambda u + \eta u'=u''$ with
convection coefficient $\eta$ passing through zero.
The coefficient matrix for the associated first-order system 
is 
\be\label{egA}
A:=\bp 0 & 1 \\ \lambda & \eta\ep.
\ee
Then, the stable eigenvector of $A$ determined by Kato's ODE \eqref{Kato} is
\be\label{charkato}
R(\eta, \lambda):=
\frac{ (\eta^2/4+ 1)^{1/4}} { (\eta^2/4+ \lambda)^{1/4}}
\Big(1, -\eta/2 - \sqrt{\eta^2/4 + \lambda}\, \Big)^T,
\ee
which, apart from the divergent factor 
$\frac{ (\eta^2/4+ 1)^{1/4}} { (\eta^2/4+ \lambda)^{1/4}}$,
is a smooth function of $ \sqrt{\eta^2/4 + \lambda}$.
\end{example}

\begin{proof}
By straightforward computation, 
$\mu_\pm(\lambda):= \mp(\eta/2 +\sqrt{\eta^2/4 + \lambda}$
and $\mathcal{V}_\pm:=(1, \mu_\pm(\lambda))^T$
are eigenvalues and eigenvectors of 
the matrix $A$ of \eqref{egA} in Example \ref{sqrteg}. 
The associated Kato eigenvectors $V^\pm$
are determined uniquely, up to a constant factor
independent of $\lambda$, by the property that there
exist corresponding left eigenvectors $\tilde V^\pm$ such that
\begin{equation}
\label{katoprop2}
(\tilde V\cdot V)^\pm \equiv  {\rm constant}, \quad
(\tilde V \cdot \dot V)^\pm \equiv 0,
\end{equation}
where ``$\, \, \dot{    }\, \,$'' denotes $d/d\lambda$;
see Lemma \ref{katolem}(iii).

Computing dual eigenvectors $\tilde{\mathcal{V}}^\pm =
(\lambda+\mu^2)^{-1}(\lambda, \mu_\pm)$ satisfying
$(\tilde{\mathcal{V}}\cdot \mathcal{V})^\pm \equiv 1$, 
and setting $V^\pm=c_\pm\mathcal{V}^\pm$, $\tilde V^\pm=\mathcal{V}^\pm/c_\pm$,
we find after
a brief calculation that \eqref{katoprop2} is equivalent to the complex ODE
\begin{equation}\label{kato_ode}
\begin{aligned}
\dot c_\pm &= -\Big( \frac{ 
\tilde V \cdot \dot V
}{ 
\tilde V \cdot V
}\Big)^\pm c_\pm 
=
-\Big( \frac{ 
\dot \mu
}{ 
2\mu - \eta
}\Big)_\pm c_\pm,
\end{aligned}
\end{equation}
which may be solved by exponentiation, yielding the general solution
$ c_\pm (\lambda)= C(\eta^2/4+\lambda)^{-1/4}.  $
Initializing without loss of generality at $c_\pm(1)=1$,
we obtain \eqref{charkato}.
\end{proof}

\br
It is straightforward to generalize by the same method
the computation of Example \ref{sqrteg}
to branch singulariteis of general order $s$.
\er

The computation of Example \eqref{sqrteg} indicates that the
Kato basis blows up at $\lambda=0$ as
$\big( {\eta}^2+4\lambda\big)^{-1/4}$ as
$\eta$ crosses the characteristic value $\eta=0$,
hence does not give a choice
that is continuous across the entire range of parameters.
However, the same example shows that there is a different choice
$(1, -\eta/2 - \sqrt{\eta^2/4 + \lambda}\, )^T$
that {\it is} continuous, possessing only a square-root singularity.
We can effectively exchange one for another, by rescaling the
Kato basis by factor $\big( {\eta}^2+4\lambda\big)^{1/4}$.
See \cite{BHZ} for examples/further details.

This issue is mainly important in problems with parameters, or
possessing branch singularities, but
can also arise for a problem without parameter or singularity that happens
to lie near a related problem with branch singularities.
In such a case the ``invisible'' branch singularity could serve
as an organizing center directing the Kato flow without the user
being aware of it.
Thus, it is important to be alert to this possibility.

\end{document}